\newcommand{\margnote}[1]{
\ifthenelse{\boolean{shownotes}}%
{\marginpar{\raggedright\tiny\texttt{#1}}}%
{}%
}
\theoremstyle{plain} \newtheorem{Theorem}{Theorem}
\theoremstyle{plain} 
\theoremstyle{plain} 
\theoremstyle{plain} 
\theoremstyle{definition} 
\theoremstyle{definition}
\theoremstyle{plain} 
\theoremstyle{plain} 
\theoremstyle{plain} \newtheorem{lemma}[Theorem]{Lemma}
\theoremstyle{plain} \newtheorem{proposition}[Theorem]{Proposition}
\theoremstyle{definition} \newtheorem{definition}[Theorem]{Definition}
\theoremstyle{definition}\newtheorem{remark}[Theorem]{Remark}
\theoremstyle{definition}
\newcommand{\curl}{\mathop{\mathrm {curl}}}
\newcommand{\nphi}{\nabla\varphi}
\newcommand{\Rdd}{{\mathbb{R}^{d\times d}}}
\newcommand{\Rd}{{\mathbb{R}^d}}
\newcommand{\RR}{{\mathbb{R}}}
\newcommand{\R}{{\mathbb{R}}}
\newcommand{\eps}{\varepsilon}
\newcommand{\ph}{\varphi}
\newcommand{\dv}{{\rm div \,}}
\newcommand{\GF}{G_{W}}
\newcommand{\tW}{\tilde{W}}
\newcommand{\tGF}{G_{\tW}}
\numberwithin{equation}{section}
\begin{document}

\title[Quasiconvex elastodynamics: weak-strong uniqueness]{Quasiconvex elastodynamics: weak-strong uniqueness for measure-valued solutions}

\author[K. Koumatos]{Konstantinos Koumatos}
\address[Konstantinos Koumatos]{\newline
Department of Mathematics, University of Sussex\\ Pevensey 2 Building,
Falmer,
Brighton, BN1 9QH, UK}
\email[]{\href{k.koumatos@sussex.ac.uk}{k.koumatos@sussex.ac.uk}}

\author[S. Spirito]{Stefano Spirito}
\address[Stefano Spirito]{\newline
Department of Information Engineering, Computer Science and Mathematics, University of L'Aquila,\\
Via Vetoio, Coppito, 67100, L'Aquila, Italy}
\email[]{\href{stefano.spirito@univaq.it}{stefano.spirito@univaq.it}}

\begin{abstract}
A weak-strong uniqueness result is proved for measure-valued solutions 
to the system of conservation laws arising in elastodynamics. 
The main novelty brought forward by the present work is that the underlying 
stored-energy function of the material is assumed strongly quasiconvex. 
The proof employs tools from the calculus of variations to establish general convexity-type bounds on quasiconvex functions 
and recasts them in order to adapt the relative entropy method to quasiconvex elastodynamics.
 \vspace{0.1cm}

\noindent \textsc{Keywords}: elasticity, dynamics, hyperbolic conservation laws, quasiconvexity, relative entropy \vspace{0.1cm}

\noindent {MSC2010}: 74B20, 35L65, 35L40, 35Q74
\end{abstract}

\maketitle

\section{Introduction}
For $d = 2,3$ let $Q=(0,1)^d$ and $Q_T = (0,T)\times Q$ for some arbitrary finite $T>0$.  
For $(t,x)\in Q_T$ and $S:\Rdd\to\Rdd$ a given mapping,
we consider the system of conservation laws
\begin{equation}\label{eq:intro_cl}
\begin{aligned}
\partial_t u(t,x)-\dv S(F(t,x))&=0,\\
\partial_t F(t,x)-\nabla u(t,x)&=0,\\
u(0,x) &= u^0(x),\\
F(0,x) &= F^0(x),
\end{aligned}
\end{equation}
for the unknown functions $u:Q_T\to\Rd$ and $F:Q_T\to\Rdd$ under periodic boundary conditions. Imposing the additional constraint
\begin{equation}\label{eq:inv_intro}
\curl F(t,x)=0\textrm{ for any }t\in(0,T),
\end{equation}
system \eqref{eq:intro_cl} reduces to the equations of motion of a (homogeneous) hyperelastic body in the absence of external forces.
In this context, the mapping $S$ expresses the Piola-Kirchhoff stress tensor which, under the assumption of hyperelasticity, is given by
\begin{equation*}
S(F) = D W(F), \quad F\in \Rdd,
\end{equation*}
where $W:\Rdd\to\RR$ models the stored-energy function of the material.
Indeed, by setting  $F=\nabla y$ and $u=\partial_t y$, for some function $y:\Rd\to\Rd$ representing the deformation of the body, it follows that $y$ satisfies the quasi-linear wave equation
\begin{equation}\label{eq:intro_wave}
\frac{\partial^2y(t,x)}{\partial t^2} - \dv S(\nabla y(t,x)) = 0
\end{equation}
which is the standard form of Cauchy's equations of motion in elasticity.
It is important to point out that the constraint \eqref{eq:inv_intro} is an involution of system \eqref{eq:intro_cl}, meaning that if the initial data $F^0$ are curl-free, the evolution preserves the constraint for the solution $F$, see e.g. \cite{dafermos_involutions}.
The aim of this paper is to study the question of weak-strong uniqueness for measure-valued solutions to system \eqref{eq:intro_cl} in $(0,T)\times Q$
under the assumption of (strong) quasiconvexity for the stored-energy function $W$. 
The notion of measure-valued solutions was originally introduced by DiPerna in \cite{DiPerna85} for conservation laws and by DiPerna \& Majda in \cite{DiPerna_Majda} for the Euler equations and, 
although it is a weak notion of solution, it is one that allows for a global existence theory in many physical systems.
The question of weak-strong uniqueness is then natural as the minimal requirement for any notion of solution, namely that it must agree with the classical solution whenever the latter exists and has gained much attention in recent years, see \cite{BrenierDeLellis_CMP}, \cite{TzavarasARMA2012}.

In the theory of conservation laws as well as in the equations of fluid dynamics, convexity of the energy is key to the analysis. In particular, the natural bounds that convexity induces on the energy allow for stability and weak-strong uniqueness results to be established via an application of the so-called relative entropy method, see \cite{dafermos_book}, a tool that has also proved useful in treating singular limits \cite{lattanzio_tzavaras2006}.
%{\color{red}In this paper we rely on relative entropy methods, which revels to be an efficient tool not only to treat weak-strong uniqueness but also singular limits, see \cite{lattanzio_tzavaras2006}.}  
However, in nonlinear elasticity, the energy associated to system \eqref{eq:intro_cl} takes the form
\[
\frac{1}{2}|u|^2 + W(F)
\]
and convexity of the stored-energy function $W$ is seen as inconsistent with frame-indifference since it imposes stringent positivity conditions on the stress, see e.g. \cite[Proposition 17.5.3]{silhavy_book}.

Instead, a natural notion of convexity in elasticity is that of quasiconvexity (Definition \ref{def:QC}) - a condition strictly weaker than convexity for $d\geq 2$. Indeed, at least in the static case, \eqref{eq:intro_wave} reduces to the system
\begin{equation}\label{eq:intro_static}
-\dv S(\nabla y) = 0
\end{equation}
with the associated variational problem of minimizing the energy functional
\[
\mathcal{E}(y):=\int W(\nabla y).
\]
In this context and modulo growth conditions, the assumption of quasiconvexity on $W$ is equivalent to the weak lower semicontinuity of $\mathcal E$ in the Sobolev space $W^{1,p}$, $p\in(1,\infty)$, which provides the existence of minimizers via the direct method. In fact, quasiconvexity is \emph{almost} necessary for the existence of minimizers as \cite[Corollary 5.2]{Ball_Murat} suggests. It is hence not only natural to consider the problem of quasiconvex elastodynamics but to also conjecture that the quasiconvexity of $W$ must endow the dynamics with better properties; the weak-strong uniqueness result proved in the present article being one such property. As a matter of fact and to the best of the authors' knowledge, this is the first result in which quasiconvexity is explicitly used in the evolution problem.

In fact, the non-local nature of quasiconvexity \cite{kristensen_nonlocal} poses great difficulties and other convexity conditions have been introduced, namely polyconvexity and rank-one convexity, satisfying the following chain of implications: 
\begin{equation*}
\textrm{convexity}\Rightarrow\textrm{polyconvexity}\Rightarrow\textrm{quasiconvexity}\Rightarrow\textrm{rank-one convexity}.
\end{equation*}
We note that all reverse implications are known to be false, apart for the case of rank-one convexity implying quasiconvexity and $d=2$ which remains an open problem, see \cite{sverak_counterexample}. For precise definitions as well as proofs of the above implications and counterexamples, we refer the reader to \cite{mullernotes,sverak_counterexample}.

In terms of the evolution problem \eqref{eq:intro_cl} and the above convexity conditions, local existence of classical solutions for the Cauchy problem has been shown in  \cite{dafermos_book} - see also \cite{HuKaMa1977} for the wave equation \eqref{eq:intro_wave} - under the assumption of (strong) rank-one convexity on $W$ (in particular, quasiconvexity)\footnote{We also refer the reader to the monograph of Valent \cite{valent_book} where local well-posedness results are proved for the boundary value problem through the Implicit Function theorem and without any constitutive assumptions on $W$ in the form of convexity conditions.}. In this case, however, weak-strong uniqueness can only be established assuming small enough shocks in the weak solution, see \cite{dafermos_book,dafermos_involutions}, although the global existence of weak solutions is an open problem even in the case of convex $W$. 
In drawing analogues between statics and dynamics, we remark that the rank-one convexity of $W$ makes the static problem \eqref{eq:intro_static} elliptic and the evolution problem \eqref{eq:intro_cl} hyperbolic.

Regarding polyconvexity, many physical energies fall into this category and the static theory admits minimizers even under the physical assumption that the energy density $W$ blows up as $\det\nabla y\to0^+$, see Ball \cite{Ball_existence}. The problem of extending Ball's seminal result to quasiconvex functions, remains an important open problem in elastostatics and we will not be concerned with it here. For polyconvex energies and the evolution problem, the existence and weak-strong uniqueness of measure-valued solutions for the initial boundary value problem on the flat torus has been shown by Demoulini, Stuart \& Tzavaras in \cite{TzavarasARMA2001} and \cite{TzavarasARMA2012}, respectively. In particular, the weak-strong uniqueness result in \cite{TzavarasARMA2012} employs the relative entropy method and the convexity of the energy for an enlarged system whose involutions make it equivalent to \eqref{eq:intro_cl}. 

As a further motivation for the use of measure-valued solutions as well as our result, we note that the variational principle in elastostatics is motivated by an (in general only formal) argument showing that the dynamics produce infimizing sequences for the energy $\mathcal E$ so that, in the limit $t\to\infty$, minimizers of the energy are attained when these exist. The rigorous justification of the variational principle is an open problem in elasticity and the reader is referred to \cite{Ball_open_problems} and references therein for a discussion. However, as mentioned above, in the absence of quasiconvexity, the functional $\mathcal E$ may not admit minimizers. Instead, the gradients of infimizing sequences typically develop oscillations and then it is the generated Young measures that minimize the relaxed problem
\[
\mathcal E^{\rm rel} = \int \langle\nu_x, W\rangle.
\] 
This is precisely the framework under which microstructure in materials undergoing martensitic transformations is modelled, see e.g. \cite{BallJames92}. In this context, it is thus not unreasonable to consider measure-valued solutions in elastodynamics. Nevertheless, if $W$ is quasiconvex and minimizers do exist then it is also natural to expect that the dynamics should produce stronger solutions than measures and that the measure-valued solutions should collapse to this stronger solution. This may serve as an interpretation of the weak-strong uniqueness result, although it is unknown whether such strong solutions exist globally under the quasiconvexity assumption.

In the present work, we consider the flat torus as our spatial domain and a suitable notion of dissipative measure-valued solutions to system \eqref{eq:intro_cl} is defined (see Definition \ref{def:mvs}). For these solutions, a weak-strong uniqueness result is established for stored-energy functions $W$ which are strongly quasiconvex. The defined measure-valued solutions assume two additional properties compared to standard definitions, see e.g. \cite{TzavarasARMA2012}, which are natural in the sense that any reasonable approximating system will fulfil these requirements. On the one hand, we assume that the measure-valued solutions are generated by a sequence of spatial gradients.
% i.e. that they are gradient Young measures. 
Due to the induced involution of the system, this is natural but also essential in order to use the quasiconvexity assumption.
On the other hand, the generating sequences are also required to enjoy a certain time regularity, in particular 
$(\partial_t F^n)_{n}\subset L^{\infty}(0,T; H^{-1}(Q))$. This condition should also be satisfied by reasonable approximations and it establishes an equivalence between measure-valued solutions of the wave equation \eqref{eq:intro_wave} and the system of conservation laws \eqref{eq:intro_cl}.
We remark that existence of such dissipative measure-valued solutions is simple to obtain and it is thus not addressed in the current work (see Remark \ref{rem:existence}). 
%For example, it suffices to approximate system \eqref{eq:intro_cl} by
%\begin{align*}
%\frac{\partial u^\eps}{\partial t} - \dv S(F^\eps) & = \eps\Delta u^\eps - \eps\Delta(\Delta u^\eps) \\
%\frac{\partial F^\eps}{\partial t} - \nabla u^\eps & = 0.
%\end{align*}
%The above system is of course nonphysical but the problem of approximating with a physical viscosity is largely open as simply adding the term $\Delta u$ in the first equation of \eqref{eq:intro_cl} gives rise to the Cauchy stress tensor $(\partial_t F)F^T$ which is, in general, not symmetric and thus incompatible with the conservation of angular momentum. This is out of our scope and we do not comment further; the reader is referred to \cite{Demoulini_viscous} for a discussion and results in this direction.

The article is organized as follows: in Section \ref{sec:prelim}, we introduce some preliminary notation, definitions and tools. These include our functional setting as well as a brief summary on Young measures and quasiconvexity. In Section \ref{sec:solutions}, we lay out all assumptions made on the stored-energy function $W$ and we define the notion of measure-valued solutions for system \eqref{eq:intro_cl}. In Section \ref{sec:main}, we state and prove the weak-strong uniqueness theorem for the defined dissipative measure-valued solutions which is the main result of the paper. The proof of this result is based on a variant of the relative entropy method, however, the lack of convexity of $W$ presents a crucial obstacle. This obstacle is overcome by noting that the pointwise bounds provided by convexity are not required but an averaged version of them suffices. This is precisely the content of Theorem \ref{theorem:2} where these averaged convexity-type bounds on the quasiconvex stored-energy $W$ are established. We stress that Theorem \ref{theorem:2} is independent of the equations and it is of broader interest. Its proof, motivated by the works in \cite{JCCVMO,JCCKK,AcFus87,Zhang92}, is based on the calculus of variations and it is postponed until Section \ref{sec:propproof} where a precise statement is also given. %For the ease of the reader, Proposition \ref{prop:main}, which is a simple consequence of Theorem \ref{theorem:2}, is stated 

%The proof of Proposition \ref{prop:main} is based on the calculus of variations and, for the ease of the reader, its proof is postponed until Section \ref{sec:propproof}. This proof is % and is of broader interest as it adapts the relative entropy method to the quasiconvex setting.

\section{Notations and Pleliminaries}\label{sec:prelim}
In this section we fix the notation used in the paper and we recall definitions and useful facts about quasiconvex functions and Young measures which are used in the sequel. 
\subsection{Notation and function spaces}
We denote by $C^{k}(Q)$ and $C^{\infty}(Q)$ the spaces of $k$-times continuously differentiable and smooth functions, respectively, which are $Q$-periodic. 
%$of C^{k}$ and $C^{\infty}$ functions periodic $Q$.
We denote by $L^{p}(Q)$ the standard Lebesgue spaces of $Q$-periodic functions and by $\|\cdot\|_{L^{p}(Q)}$ their norm. The Sobolev space of $L^{p}$ $Q$-periodic functions with $k$ distributional derivatives in $L^{p}$ is denoted by $W^{k,p}(Q)$ and their norms by $\|\cdot\|_{k,p}$. In the case $p=2$ and $k=1$ we denote by $H^{1}_{0}(Q)$ the space of periodic functions in $W^{1,2}(Q)$ with zero average and the spatial average on the torus $Q$ of a function $f$ is denoted by $(f)_Q$. Finally, we let $H^{-1}(Q):=(H^{1}_{0}(Q))'$. Concerning the time dependence, we consider the classical Bochner spaces $L^p(0,T;X)$, endowed with
the norm
\begin{equation*}
  \|f\|_{L^p(X)}:=
  \left\{\begin{aligned}
      &\left(\int_0^T\|f(s)\|^p_X\right)^{1/p}\quad &\text{if }1\leq p<\infty,
      \\
      &\sup_{0\leq s\leq T}\|f(s)\|_X &\text{if }p=+\infty,
\end{aligned}
\right.
\end{equation*}
where $X$ is a Banach space. In particular, when $X=L^{p}(Q)$ the norm of $L^{p}(0,T;L^{p}(Q))$ is denoted by $\|\cdot\|_{L^{p}(Q_T)}$. Whenever the target space is clear from the context we will not distinguish between scalar, vector and matrix-valued spaces. Finally, for a general regular domain $\Omega\subset\RR^d$ which is not the $d$-dimensional flat torus, the space $W^{1,p}_{0}(\Omega)$ denotes the space of Sobolev functions in $W^{1,p}(\Omega)$ whose trace on the boundary of $\Omega$ vanishes. 
\subsection{Quasiconvexity}

Throughout we assume that $p\geq 2$ and we define the auxiliary function $V:\RR^k\to\RR$ by
\begin{equation}\label{eq:defV}
V(\xi) = (|\xi|^2 + |\xi|^p)^{1/2}
\end{equation}
where $k = d$ or $k = d\times d$.

\begin{definition}\label{def:QC}
A continuous function $W:\Rdd\to\RR$ is quasiconvex at the matrix $\xi\in\Rdd$ if the inequality
\[
\int_Q \big[W(\xi + \nabla\ph(x)) - W(\xi)\big] dx \geq 0
\]
holds for all $\ph\in W^{1,\infty}(Q)$. The function $W$ is called quasiconvex if it is quasiconvex at each $\xi\in\Rdd$. If, in addition, there exists a constant $c_0>0$ such that
\[
\int_Q \big[W(\xi + \nabla\ph(x)) - W(\xi) \big] dx \geq c_0 \int_Q |V(\nabla\ph(x))|^2 dx
\]
holds for all $\xi\in\Rdd$ and all $\ph\in W^{1,\infty}(Q)$, we say that $W$ is strongly quasiconvex.
\end{definition}

\begin{remark}
Quasiconvexity is usually defined through test functions $\ph\in W_{0}^{1,\infty}(\Omega)$, with 
$\Omega\subset\RR^d$ a bounded domain. We remark that the definition presented above in terms of $Q$-periodic functions is equivalent and we refer the reader to \cite[Proposition 5.13]{dacorogna} for a proof.
\end{remark}

With the above definition at hand, we next present a lemma listing some crucial properties of $W$ under quasiconvexity and growth assumptions. All properties are standard and we refer the reader to \cite{dacorogna} for the proofs in the case of quasiconvex functions; the extension to strongly quasiconvex functions is analogous and the last assertion is a corollary of (3).

\begin{lemma}\label{lemma:prelim}
Suppose that $W:\Rdd\to\RR$ is continuous, strongly quasiconvex and satisfies
\[
|W(\xi)| \leq c(1 + |\xi|^p).
\]
Then the following hold:
\begin{enumerate}
\item the defining inequality
\[
\int_Q \big[W(\xi + \nabla\ph(x)) - W(\xi) \big] dx \geq c_0 \int_Q |V(\nabla\ph(x))|^2 dx
\]
holds for all $\ph\in W^{1,p}(Q)$, i.e. $W$ is $p$-quasiconvex;
\item for any $x_0\in\RR^d$ and $r>0$, denoting $Q(x_0,r) = x_0 + rQ$, it holds that
\[
\int_{Q(x_0,r)} \big[W(\xi + \nabla\ph(x)) - W(\xi) \big] dx \geq c_0 \int_{Q(x_0,r)} |V(\nabla\ph(x))|^2 dx,
\] 
for all $\varphi \in W_{0}^{1,p}(Q(x_0,r))$.
\item for some constant $c>0$ and every $\xi,\eta\in\Rdd$
\[
|W(\xi) - W(\eta)|\leq c (1 + |\xi|^{p-1} + |\eta|^{p-1})|\xi - \eta|;
\]
\item if, in addition, $W\in C^1(\Rdd,\RR)$ then
\[
|DW(\xi)|\leq c(1+|\xi|^{p-1}).
\]
%\item also, whenever
%\[
%|W(\xi)|\leq c |\xi|^p,
%\]
%it holds that for some constant $c>0$ and every $\xi,\eta\in\Rdd$
%\[
%|W(\xi) - W(\eta)|\leq c (|\xi|^{p-1} + |\eta|^{p-1})|\xi - \eta|.
%\]
\end{enumerate}
\end{lemma}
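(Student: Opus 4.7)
The plan is to prove the four assertions in turn, relying on density arguments for (1), a scaling/change of variables argument for (2), the well-known chain that quasiconvexity together with $p$-growth implies rank-one convexity and hence the Lipschitz-type bound for (3), and a differentiation of (3) for (4). The only substantive work is in (1) and (3); the other two are essentially formal consequences.

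For (1) the plan is an approximation argument. Given $\varphi\in W^{1,p}(Q)$, one approximates by a sequence $\varphi_n\in W^{1,\infty}(Q)$ with $\varphi_n\to\varphi$ in $W^{1,p}(Q)$ (e.g.\ by mollification using $Q$-periodic extensions, or via a Lipschitz truncation). On the left-hand side, the $p$-growth of $W$ together with assertion (3) (applied in its $W^{1,\infty}$ form, which is standard under quasiconvexity and $p$-growth) gives equi-integrability of $W(\xi+\nabla\varphi_n)$, so $\int_Q W(\xi+\nabla\varphi_n)\to\int_Q W(\xi+\nabla\varphi)$. On the right-hand side, $|V(\xi)|^2=|\xi|^2+|\xi|^p$ is continuous on $\Rdd$ and dominated by the $L^p$-norm, so $\int_Q |V(\nabla\varphi_n)|^2\to\int_Q |V(\nabla\varphi)|^2$. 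Passing to the limit in the Definition \ref{def:QC} inequality, written for each $\varphi_n$, yields (1). The mildly delicate point is that one must order the argument so that (3) for Lipschitz test gradients is derived before using it here; alternatively one may dominate directly by $c(1+|\nabla\varphi_n|^p+|\nabla\varphi|^p)$ and invoke Vitali.

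For (2) the plan is to reduce to the unit cube by scaling. Given $\varphi\in W^{1,p}_0(Q(x_0,r))$, define $\psi(y)=\tfrac{1}{r}\varphi(x_0+r y)$ for $y\in Q$. Since $\varphi$ vanishes on $\partial Q(x_0,r)$, the function $\psi$ extends to a $Q$-periodic function in $W^{1,p}(Q)$, and $\nabla\psi(y)=\nabla\varphi(x_0+ry)$. Applying (1) to $\psi$ and changing variables $x=x_0+ry$ multiplies both sides by $r^d$, yielding exactly the desired inequality on $Q(x_0,r)$.

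For (3) the standard approach is a one-dimensional slicing argument. Quasiconvexity implies rank-one convexity, i.e.\ $t\mapsto W(\xi+t\,a\otimes b)$ is convex for every $a\in\RR^d$, $b\in\RR^d$; combined with the growth $|W|\le c(1+|\cdot|^p)$, convexity on a line gives a local Lipschitz bound on that line with constant controlled by $1+|\xi|^{p-1}$ (by comparing the difference quotient at $\xi$ with the growth at a far point). Writing $\xi-\eta=\sum_{i=1}^{d\times d} t_i\,a_i\otimes b_i$ as a sum of at most $d^2$ rank-one matrices of norm comparable to $|\xi-\eta|$, and interpolating along the resulting polygonal path joining $\eta$ to $\xi$, one obtains the bound $|W(\xi)-W(\eta)|\le c(1+|\xi|^{p-1}+|\eta|^{p-1})|\xi-\eta|$; this is precisely the argument in \cite{dacorogna}. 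For the strongly quasiconvex case the same proof applies since $W$ strongly quasiconvex is in particular quasiconvex. Finally, (4) follows from (3) by setting $\eta=\xi+th$ with $|h|=1$, dividing by $|t|$, and letting $t\to0$ to get $|DW(\xi)\cdot h|\le c(1+|\xi|^{p-1})$, then taking the supremum over unit $h$.

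The main obstacle is the slicing step in (3): organising the rank-one decomposition and the one-dimensional Lipschitz estimates so that the final constant depends only on $1+|\xi|^{p-1}+|\eta|^{p-1}$ and not on intermediate points. Once (3) is in place, (1), (2) and (4) are essentially bookkeeping.
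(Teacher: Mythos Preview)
Your proposal is correct and coincides with the standard arguments the paper defers to: the paper does not give its own proof of this lemma but simply cites \cite{dacorogna}, remarking that the extension from the quasiconvex to the strongly quasiconvex case is analogous and that (4) is a corollary of (3). The density argument for (1), the scaling reduction for (2), the rank-one convexity plus polygonal-path Lipschitz estimate for (3), and the differentiation step for (4) are exactly the classical proofs found in that reference.
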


\subsection{Young Measures}

For $q\geq 0$ and $m,k\geq 1$ arbitrary, we let $C_q(\RR^k)$ denote the subspace of continuous functions on $\RR^k$, $C(\RR^k)$, given by
\begin{equation}\label{eq:Cq}
C_q(\RR^k):=\left\{g\in C(\RR^k)\,:\,\lim_{|\xi|\to\infty}\frac{g(\xi)}{|\xi|^q} = 0\right\}.
\end{equation}
Under the above notation, the space $C_0(\RR^k)$ denotes the space of continuous functions `vanishing at infinity' and it can be identified with the completion of compactly supported, continuous functions in the $L^\infty$-norm. By the Riesz representation theorem, its dual, $C_0(\RR^k)^*$ is isometrically isomorphic to the space of signed Radon measures on $\RR^k$, $\mathcal M(\RR^k)$, equipped with the total variation norm.

Let $\Omega\subset \RR^m$ be a bounded domain and denote by 
\[
L^\infty_{w*}(\Omega,\mathcal M(\RR^k))
\] 
the space of essentially bounded, weakly-$*$ measurable maps from $\Omega$ into $\mathcal M(\RR^k)$, i.e. those mappings $\nu: x\mapsto \nu_x\in \mathcal M(\RR^k)$ such that
\begin{itemize}
\item ${\displaystyle \sup_{x\in\Omega} \|\nu_x\|_{\mathcal M(\RR^k)} < \infty}$;
\item for all $g\in C_0(\RR^k)$ the function
\[
x\mapsto \langle\nu_x, g\rangle:=\int_{\RR^k} g(\xi) d\nu_x(\xi)
\]
is measurable.
\end{itemize}

A Young measure $\nu = (\nu_x)_{x\in\Omega}$ on $\Omega$ is an element of $L^\infty_{w*}(\Omega,\mathcal M(\RR^k))$ taking values in the space of probability measures, i.e.
\[
\|\nu_x\|_{\mathcal M(\RR^k)} = 1\mbox{ a.e. in $\Omega$.}
\]
Note that by the separability of $C_0(\RR^k)$, it holds that
\[
L^\infty_{w*}(\Omega,\mathcal M(\RR^k)) = L^1(\Omega,C_0(\RR^k))^*
\]
and this duality defines a weak-$*$ convergence of Young measures. Then, the fundamental theorem of Young measures, see e.g. \cite{BallYM}, states that given a sequence $(Y_n)$ bounded in $L^q(\Omega,\RR^k)$, $1\leq q < \infty$, there exists a subsequence and a Young measure $\nu = (\nu_x)_{x\in\Omega}$ such that
\begin{equation}\label{eq:generationYM}
g(Y_n) \rightharpoonup \langle\nu_x,g\rangle \mbox{ in }L^1(\Omega)\mbox{ for all }g\in C_q(\RR^k).
\end{equation}
We note that $C_q(\RR^k)$ is itself separable when equipped with the norm $\|g(\cdot)/(1+|\cdot|^q)\|_{\infty}$ and that
the above convergence also holds whenever the sequence $(g(Y_n))$ is equiintegrable.
In particular, the barycentre $\langle\nu_x,{\rm id}\rangle$ of the generated Young measure identifies the weak limit of the sequence $(Y_n)$, i.e. 
%since by \eqref{eq:generationYM} and the boundedness of $(Y_n)$ in $L^q(\Omega)$,
\[
Y_n \rightharpoonup \langle\nu_x,{\rm id}\rangle \mbox{ in }L^q(\Omega).
\]
Whenever \eqref{eq:generationYM} holds, we say that the sequence $(Y_n)$ generates the Young measure $\nu$ and we call $\nu$ a $q$-Young measure, i.e. a Young measure generated by a sequence bounded in $L^q(\Omega)$. If, in addition, $Y_n = \nabla y_n$ for some $y_n\in W^{1,q}(\Omega)$ then we call $\nu$ a gradient $q$-Young measure. It can also be shown, see e.g. \cite{kristensen99,kruzik&rubicek}, that every Young measure $\nu$ satisfying
\[
\int_\Omega \langle\nu_x,|\cdot|^q\rangle \,dx <\infty
\]
is indeed a $q$-Young measure. We note that similar statements hold for $q=\infty$ but we will only be concerned with finite exponents. 

In the sequel, we are interested in Young measures generated by sequences $(Y_n)$ bounded in the Bochner space $L^\infty(0,T;L^q(Q))$. Of course, $(Y_n)$ is then also bounded in $L^q(Q_T)$ and generates a $q$-Young measure $\nu=(\nu_{t,x})_{(t,x)\in Q_T}$ satisfying
\[
\int_{Q_T} \langle\nu_{t,x},|\cdot|^q\rangle \,dx <\infty.
\]
The following lemma, which is crucial in our analysis, shows that the above integrability can be improved to obtain $L^\infty$ bounds in the time variable. Its proof can be found in \cite{BrenierDeLellis_CMP}.

\begin{lemma}\label{lemma:supboundint}
Let $(Y_n)$ be a sequence of functions bounded in $L^\infty(0,T;L^q(Q))$, generating the $q$-Young measure $\nu=(\nu_{t,x})_{(t,x)\in Q_T}$ in $L^q(Q_T)$. Then
\[
 \sup_t \int_Q \langle\nu_{t,x},|\cdot|^q\rangle\,dx < \infty. 
\]
%\esssup?
\end{lemma}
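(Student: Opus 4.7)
The plan is to reduce the claim to a Lebesgue-point argument in time. The main obstacle is that $|\cdot|^q$ lies exactly on the boundary of the class $C_q(\RR^k)$, so the Young-measure convergence cannot be applied to it directly; I would sidestep this by truncation.

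Fix a sequence $(\varphi_R)_{R\in\mathbb{N}}\subset C_c(\RR^k)$ of nonnegative continuous functions with $\varphi_R(\xi)\nearrow |\xi|^q$ pointwise (for instance, $|\xi|^q$ multiplied by a smooth radial cutoff equal to one on $\{|\xi|\leq R\}$). Since $\varphi_R\in C_0(\RR^k)\subset C_q(\RR^k)$, the fundamental theorem of Young measures yields $\varphi_R(Y_n)\rightharpoonup \langle\nu_{t,x},\varphi_R\rangle$ in $L^1(Q_T)$. Writing $C:=\sup_n \|Y_n\|_{L^\infty(0,T;L^q(Q))}$ and testing this weak convergence against $\mathbbm{1}_{A\times Q}\in L^\infty(Q_T)$ for an arbitrary measurable $A\subset(0,T)$,
\[
\int_A\!\int_Q \langle\nu_{t,x},\varphi_R\rangle\, dx\, dt \;=\; \lim_{n\to\infty}\int_A\!\int_Q \varphi_R(Y_n)\, dx\, dt \;\leq\; \sup_{n}\int_A \|Y_n(t,\cdot)\|_{L^q(Q)}^q\, dt \;\leq\; C^q |A|.
\]
Monotone convergence in $R$ then removes the truncation and produces the key estimate
\[
\int_A\!\int_Q \langle\nu_{t,x},|\cdot|^q\rangle\, dx\, dt \;\leq\; C^q |A| \qquad \text{for every measurable } A\subset(0,T).
\]

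To conclude, set $f(t):=\int_Q \langle\nu_{t,x},|\cdot|^q\rangle\, dx$. The integrand is measurable on $Q_T$ as the monotone limit of the weakly-$\ast$ measurable maps $(t,x)\mapsto \langle\nu_{t,x},\varphi_R\rangle$, so Fubini makes $f$ measurable on $(0,T)$ and the last display reads $\int_A f(t)\, dt \leq C^q|A|$ for every measurable $A\subset(0,T)$. Choosing $A=\{t\in(0,T): f(t)>C^q\}$ forces $|A|=0$, hence $\esssup_{t\in(0,T)} f(t)\leq C^q$, which is exactly the claimed bound. The only real technical point is the truncation step, needed precisely because $|\cdot|^q\notin C_q(\RR^k)$; everything else is a routine measure-theoretic argument.
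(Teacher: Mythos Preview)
Your proof is correct. The paper does not supply its own argument for this lemma but refers to \cite{BrenierDeLellis_CMP}; the truncation-then-monotone-convergence argument you give, followed by the Lebesgue-point/averaging step to pass from $\int_A f\leq C^q|A|$ to an essential-sup bound, is essentially the standard proof found there.
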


We end this section by remarking that, in our context, 
\[
Y_n = (u^n,F^n)\in L^\infty(0,T;L^2(Q)\times L^p(Q)),
\]
where $u^n\in\Rd$ and $F^n\in\Rdd$. Then, the sequence $(Y_n)$ generates a Young measure $\nu$ and the convergence
\[
g(Y_n) \rightharpoonup \langle\nu_x,g\rangle \mbox{ in }L^1(Q_T)
\]
holds for all $g\in C(\Rd\times\Rdd)$ such that
\[
\lim_{|\lambda| + |\xi|\to\infty} \frac{g(\lambda,\xi)}{|\lambda|^2+|\xi|^p}=0.
\]
Also, once $(Y_n)$ is bounded in $L^\infty(0,T;L^2(Q)\times L^p(Q))$, it holds that
\[
Y_n \rightharpoonup Y = (u,F)\in L^\infty(0,T;L^2(Q)\times L^p(Q)),
\]  
where, denoting by $\pi_d:\Rd\times\Rdd\rightarrow\Rd$ and $\pi_{d\times d}:\Rd\times\Rdd\rightarrow\Rdd$ the projections onto $\Rd$ and $\Rdd$, respectively, 
\[
u(t,x) = \langle\nu_{t,x},\pi_d\rangle\quad\mbox{and}\quad F(t,x) = \langle\nu_{t,x},\pi_{d\times d}\rangle\mbox{ a.e. in $Q_T$}.
\]

\section{Definition of Measure-Valued Solutions}\label{sec:solutions}
In this section we give the precise definition of measure-valued solutions for the system under consideration. 
We start by writing system \eqref{eq:intro_cl} in a precise form:
in $(0,T)\times Q$ we consider the following initial value problem for $Q$-periodic functions:
\begin{equation}\label{eq:cl}
\begin{aligned}
\partial_t u_i-\partial_{\alpha} S_{i\alpha}(F)&=0,\\
\partial_t F_{i\alpha}-\partial_{\alpha} u_i&=0,\\
u|_{t=0}&=u^0,\\
F|_{t=0}&=F^0. 
\end{aligned}
\end{equation}
%where $S_{i\alpha}=\frac{\partial W}{\partial F_{i\alpha}}$. 
Moreover, we assume that $F$ satisfies the involution \eqref{eq:inv_intro}, namely we assume that there exists $y:\Rd\to\Rd$ such that 
\begin{equation}\label{eq:inv}
F(t,x)=\nabla y(t,x)\textrm{ for }(t,x)\in(0,T)\times Q
\end{equation}
and, without loss of generality, we may assume that 
\begin{equation}\label{eq:zeroav}
\int_{Q}\,y(t,x)\,dx=0.
\end{equation}
Concerning the tensor $S$ we assume that for $F\in\Rdd$
\begin{equation*}
S(F)=DW(F),\text{ or equivalently, } S_{i\alpha}(F) = \frac{\partial W(F)}{\partial F_{i\alpha}}
\end{equation*}
and that the stored-energy function $W:\Rdd\to\RR$ satisfies the following:
\begin{itemize}
\item[(H1)] $W\in C^3(\Rdd)$;
\item[(H2)] $W$ is strongly quasiconvex with constant $c_0>0$;
% i.e. there exists a constant $c>0$ such that the inequality
%\[
%\int_Q \Big[W(\xi + \nabla\ph(x)) - W(\xi)\Big] \geq \int_Q |V(\nabla\ph)|^2
%\]
%holds for all $\xi\in\Rdd$ and all $\ph\in W^{1,p}_{\rm per}(Q,\Rd)$;
\item[(H3)] $ |W(\xi)| \leq c(1 + |\xi|^p)$ and $|D^2W(\xi)|\leq c(1 + |\xi|^{p-1})$;
\item[(H4)] $c(|\xi|^p-1) \leq W(\xi)$.
\end{itemize}
\begin{remark}\label{rem:secondgrowth}
We remark that the assumed growth on the second derivative of $W$ in (H3) is not redundant. Indeed, there exist strongly quasiconvex functions with $p$-growth, yet with no polynomial control on the second derivative, see \cite{AcFus87}.
\end{remark}
Concerning the initial data for the velocity, we assume that 
\begin{equation}\label{eq:idu}
u^0\in L^{2}(Q)\textrm{ and }\int_{Q}u^0=0,\\
\end{equation}
and for the deformation tensor we assume that there exists $y^0\in H^{1}_{0}(Q)\cap W^{1,p}(Q)$ such that
\begin{equation}\label{eq:idf}
F^0=\nabla y^0.
\end{equation}
We recall that system \eqref{eq:cl} is endowed with a natural entropy-entropy flux pair $(\eta, q)$ given by
\[
\eta(u,F) = \frac12 |u|^2 + W(F)\mbox{ and } q(u,F) = u^TS(F),
\] 
i.e. $q$ is a vector in $\Rd$ with components
\[
q_\alpha = u_i S(F)_{i\alpha}.
\]
In particular, any classical solution to \eqref{eq:cl} - that is a pair $(u,F)$ of Lipschitz functions on $\overline{Q}_T$, periodic on $Q$ satisfying \eqref{eq:cl} - will automatically satisfy
\begin{equation}\label{entropy_eqn}
\partial_t \eta(u,F) + \dv q(u,F) = 0.
\end{equation}
In order to define a measure-valued solution we remark that natural approximations of \eqref{eq:cl} should produce a sequence of functions $(u^{n}, F^{n})$ such that 
\begin{equation}\label{eq:enen}
\sup_n \sup_{t\in(0,T)}\int_Q\eta(u^n,F^n)\,dx<\infty
\end{equation}
and therefore the following uniform bounds hold: 
\begin{equation*}
\begin{aligned}
&u^n\in L^{\infty}(0,T;L^{2}(Q)),\\
&F^n\in L^{\infty}(0,T;L^{p}(Q)).
\end{aligned}
\end{equation*}
Another natural uniform estimate for the approximation is a bound on the time derivatives of $u^n$ and $F^n$ in some negative Sobolev space. For our purposes, it is enough to assume that the Young measure is generated by a sequence $(u^n, F^n)$ satisfying %\eqref{eq:enen} and the also 
the uniform bound
\begin{equation}\label{eq:trn}
\partial_t F^n\in L^{\infty}(0,T;H^{-1}(Q)). 
\end{equation}
The definition of dissipative measure-valued solutions for the initial boundary value problem \eqref{eq:cl}-\eqref{eq:inv}-\eqref{eq:zeroav} follows: 
\begin{definition}\label{def:mvs}
The triple $(u,F,\nu)$ is a dissipative measure-valued solution of \eqref{eq:cl}-\eqref{eq:inv}-\eqref{eq:zeroav} with initial data $(u^0, F^0)$ satisfying \eqref{eq:idu} and \eqref{eq:idf} if the following properties hold:
\begin{enumerate}
\item {\em Integrability hypothesis}\\
The vector field $u$ lies in $L^{\infty}(0,T;L^{2}(Q))$ and the matrix field $F$ in $L^{\infty}(0,T;L^{p}(Q))$;
\item {\em Equations}\\
For every $\ph\in C^{\infty}_c([0,T);C^{\infty}(Q))$ and $\Phi\in C^{\infty}_c([0,T);C^{\infty}(Q))$, the triple $(u,F,\nu)$ satisfies 
\begin{align*}
&\int_Q u^0\cdot\ph(0,\cdot) dx + \int_0^T\int_Q u \cdot \partial_t \ph dx dt = \int_0^T\int_Q \langle \nu_{t,x}, S\rangle\cdot\nabla\ph dx dt\\
&\int_Q F^0\cdot\Phi(0,\cdot) dx + \int_0^T\int_Q F\cdot \partial_t \Phi dx dt = \int_0^T\int_Q u\cdot \dv\Phi dx dt,
\end{align*}
where $u = \langle\nu, \pi_d\rangle$ and $F = \langle\nu,\pi_{d\times d}\rangle$;

\item {\em Generation of Young measure}\\
The Young measure $\nu = (\nu_{t,x})_{(t,x)\in Q_T}$ is generated by a sequence $(u^n, F^n)$ satisfying \eqref{eq:enen} and \eqref{eq:trn}. 
\item {\em Energy dissipation}\\
There exists a nonnegative Radon measure $\gamma$ such that the inequality
\[
\int_Q \theta(0)\eta(u^0,F^0) dx + \int_0^T\int_Q \dot{\theta}\left\{ \langle\nu_{t,x},\eta\rangle \,dx dt + \gamma(dxdt)\right\} \geq 0
\]
holds for all nonnegative functions $\theta\in C^1_c([0,T))$. 
\end{enumerate}
\end{definition}

\begin{remark}\label{rem:existence}
To prove existence of such measure-valued solutions it suffices to approximate system \eqref{eq:intro_cl} by, for example, the 4th order regularisation
\begin{align*}
\frac{\partial u^\eps}{\partial t} - \dv S(F^\eps) & = \eps\Delta u^\eps - \eps\Delta(\Delta u^\eps) \\
\frac{\partial F^\eps}{\partial t} - \nabla u^\eps & = 0.
\end{align*}
We note that, under the above approximation, existence can be established assuming only the smoothness and $(p-1)$-growth on $S$ whereas the quasiconvexity of $W$
is not required.
\end{remark}

\section{Main result}\label{sec:main}
In this section we state and prove our main result concerning the weak-strong uniqueness for measure-valued solutions. Its precise statement follows:
\begin{Theorem}\label{teo:main}
Let $(\bar u^0,\bar F^0)\in W^{1,\infty}(Q)$ such that $\bar F^0=\nabla \bar y^0$ for some zero-average vector $\bar y^0\in W^{2,\infty}(Q)$ and
let $(\bar{u},\bar{F})\in W^{1,\infty}([0,T]\times \overline{Q})$ be a classical solution to \eqref{eq:cl} with initial data $(\bar u^0, \bar F^0)$. If $(u,F,\nu)$ is a measure-valued solution of \eqref{eq:cl} with the same initial data $(\bar u^0, \bar F^0)$ then for almost all $(t,x)\in Q_T$,
\begin{equation*}
\begin{aligned}
&\nu_{t,x}=\delta_{\bar{u}, \bar{F}} \mbox{ and}\\
&(u,F)=(\bar{u}, \bar{F})\textrm{ a.e. in }(0,T)\times Q.
\end{aligned}
\end{equation*}
\end{Theorem} 

\begin{remark}
We remark that dissipative, in particular entropic, weak solutions are naturally included in the presented weak-strong uniqueness result, as in the work of Dafermos \cite{dafermos_involutions} for rank-one convex energies and entropic weak solutions with sufficiently small shocks. We refer the reader to Remark \ref{remark:section5} for a comparison between the present result and that of \cite{dafermos_involutions}. 

Further, we note that a possible extension of Theorem 1 to the wave equation \eqref{eq:intro_wave} on bounded domains would also preclude cavitation. This seems natural as the canonical example producing cavities entails energies which blow up as the local volume ratio approaches zero \cite{ball_cavitation,spector_cavitation_incompressible,spector_cavitation_general} and our growth assumptions exclude this type of behaviour.
\end{remark}

Let us introduce the {\em relative entropy} associated to system \eqref{eq:cl} and the classical solutions  $(\bar{u}, \bar{F})$:
\begin{equation*}
\eta_{\rm rel}(t,x,\lambda,\xi):= \frac12 |\lambda - \bar u|^2 + W(\xi) - W(\bar F) - S(\bar F)\cdot(\xi - \bar F)
\end{equation*}
and, similarly, at time $t=0$
\begin{equation*}
\eta^0_{\rm rel}(x,\lambda,\xi):= \frac12 |\lambda - \bar u^0|^2 + W(\xi) - W(\bar F^0) - S(\bar F^0)\cdot(\xi - \bar F^0),
\end{equation*}
where $(\bar u^0, \bar F^0)$ denotes the initial data for the classical solution $(\bar u, \bar F)$. We remark that, by Lemma \ref{lemma:technical_general} (a) in Section \ref{sec:propproof} (see also Remark \ref{rem:G}), there exists a constant $C>0$ such that for almost all $(t,x)\in Q_T$ and all $(\lambda,\xi)\in \mathbb{R}^d\times\mathbb{R}^{d\times d}$
\begin{align}\label{eq:growthetarel}
|\eta_{\rm rel}(t,x,\lambda,\xi)| &\leq C \left( |\lambda - \bar u|^2 + |V(\xi - \bar F)|^2\right)\\
&\mbox{and}\nonumber\\
\label{eq:growtheta0rel}
|\eta^0_{\rm rel}(x,\lambda,\xi)| &\leq C \left(|\lambda - \bar u^0|^2 + |V(\xi - \bar F^0)|^2\right) 
\end{align}

The proof of Theorem \ref{teo:main} is based on a variant of the relative entropy method and relies heavily on Theorem \ref{theorem:2} in Section \ref{sec:propproof}. For the ease of the reader, here we instead state a proposition, which is a simple consequence of Theorem \ref{theorem:2}, and aids the exposition of the proof of Theorem \ref{teo:main}. Its proof, along with Theorem \ref{theorem:2}, are postponed until Section \ref{sec:propproof}.
%Proposition \ref{prop:main} is independent of the equations and it is of interest in its own right. It essentially states that, on smooth maps, quasiconvexity behaves like an integral version of convexity. 
\begin{proposition}\label{prop:main}
Let $(u, F, \nu)$ be a measure-valued solution to \eqref{eq:cl} associated to the initial data $(u^0,F^0)$ as in Definition \ref{def:mvs}. In addition, let $(\bar u, \bar F)\in W^{1,\infty}(\overline{Q}_T)$ be a classical solution to \eqref{eq:cl} with initial data $(\bar u^0,\bar F^0)\in W^{1,\infty}(Q)$ where $\bar{F}^0 = \nabla \bar{y}^0$ for a zero average vector $\bar{y}^0$. Then, for some constants $C$, $C_0$, $C_1>0$, the following hold:
\begin{itemize}
\item[{\rm (a)}] for almost all $t_0\in(0,T)$ 
\begin{multline}\label{eq:ineq2}
\int_Q\langle\nu_{t_0,x},|V(\xi-\bar F(t_0,x))|^2 +  |\lambda - \bar u(t_0,x)|^2\rangle dx \\
\leq C_1\int_Q  \langle\nu_{t_0,x},\eta_{\rm rel}(t_0,x,\lambda,\xi)\rangle \, dx +  C_0 \int_Q |V(y(t_0,x)-\bar y(t_0,x))|^2dx;
\end{multline}
\item[{\rm (b)}] in addition, at the initial time $t=0$,
\begin{equation}\label{eq:eta0growth}
\int_Q \eta^0_{\rm rel}(x,u^0,F^0) dx \leq C \int_Q \Big[ |u^0 - \bar u^0|^2 +  |V(F^0-\bar F^0)|^2 \Big]dx
\end{equation}
\end{itemize}
\end{proposition}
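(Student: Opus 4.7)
The proof splits into two parts of very different character. Part (b) is an integrated pointwise estimate, while part (a) is where quasiconvexity enters through Theorem \ref{theorem:2}; the argument for (a) follows the relative entropy paradigm but replaces the customary pointwise convexity bound by its averaged quasiconvex analogue.

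For part (b), the inequality is immediate. Evaluating the pointwise bound \eqref{eq:growtheta0rel} at $(\lambda,\xi) = (u^0(x), F^0(x))$ and integrating over $Q$ gives the claim, with the constant $C$ depending on $\|\bar u^0\|_\infty + \|\bar F^0\|_\infty$. The bound \eqref{eq:growtheta0rel} itself follows from a second-order Taylor expansion of $W$ at $\bar F^0$ combined with the growth $|D^2 W(\xi)| \leq c(1+|\xi|^{p-1})$ in (H3).

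For part (a), I would first decompose the relative entropy into kinetic and strain contributions:
\[
\int_Q \langle\nu_{t_0,x},\eta_{\rm rel}\rangle\,dx = \frac12 \int_Q \langle\nu_{t_0,x},|\lambda-\bar u|^2\rangle\,dx + \int_Q \langle\nu_{t_0,x}, W(\xi) - W(\bar F) - S(\bar F)\cdot(\xi-\bar F)\rangle\,dx.
\]
The kinetic part already controls the $|\lambda-\bar u|^2$ contribution on the LHS of \eqref{eq:ineq2} up to a factor of $2$. The strain part is handled by applying Theorem \ref{theorem:2} to the slice $\nu_{t_0, \cdot}$, which I expect to yield an averaged quasiconvexity bound of the form
\[
\int_Q \langle\nu_{t_0,x}, W(\xi) - W(\bar F) - S(\bar F)\cdot(\xi-\bar F)\rangle\,dx \geq c_0 \int_Q \langle\nu_{t_0,x}, |V(\xi - \bar F(t_0,x))|^2\rangle\,dx - C \int_Q |V(y(t_0,x) - \bar y(t_0,x))|^2\,dx,
\]
where the lower-order correction $\int_Q |V(y - \bar y)|^2\,dx$ arises from the spatial variation of $\bar F \in W^{1,\infty}$ and is precisely what one would obtain through a truncation/localization argument in the spirit of Acerbi--Fusco and Zhang. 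Inserting this into the identity above and choosing, e.g., $C_1 = \max(2, 1/c_0)$ and $C_0 = C/\min(1/2, c_0)$ delivers \eqref{eq:ineq2}.

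The main obstacle is conceptual: one must be sure that Theorem \ref{theorem:2} can be applied slicewise in time. This relies on two properties built into Definition \ref{def:mvs}(3), namely the gradient structure $F^n = \nabla y^n$ of the generating sequence, without which quasiconvexity is unusable, and the uniform-in-time $L^p$ bound on $\nabla y^n$ (via \eqref{eq:enen} and the coercivity (H4)), which combined with Lemma \ref{lemma:supboundint} and a standard disintegration argument allows one to select, for a.e. $t_0$, a subsequence along which $(\nabla y^n(t_0, \cdot))_n$ generates the slice $\nu_{t_0,\cdot}$ as a gradient $p$-Young measure with barycentre $\nabla y(t_0,\cdot)$. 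Once Theorem \ref{theorem:2} is in hand, the remaining work is just the algebraic rearrangement above.
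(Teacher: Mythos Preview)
Your proposal is correct and follows essentially the same route as the paper: part (b) is the pointwise bound \eqref{eq:growtheta0rel} integrated over $Q$ (the paper obtains it via Lemma~\ref{lemma:G}(a) rather than directly from the $D^2W$ growth, but this is cosmetic), and part (a) is exactly the additive splitting of $\eta_{\rm rel}$ into the kinetic piece $\frac12|\lambda-\bar u|^2$ and the strain piece $G(t_0,x,\xi-\bar F)$, followed by an application of Theorem~\ref{theorem:2} to the latter.

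One remark on your last paragraph: the ``conceptual obstacle'' you describe is already absorbed into Theorem~\ref{theorem:2}, whose conclusion is stated for a.e.\ $t_0$ and whose hypotheses are precisely those guaranteed by Definition~\ref{def:mvs}(3) (note you should also cite the time-regularity bound \eqref{eq:trn}, not just the energy bound). So no further slicing argument is needed here. Your heuristic that one selects a subsequence of $(\nabla y^n(t_0,\cdot))_n$ generating the slice is, however, not how the paper proceeds inside Theorem~\ref{theorem:2}: Young-measure generation is an $L^1(Q_T)$ statement and cannot be restricted to a fixed time slice directly. The paper instead builds a generating sequence for $(\nu^F_{t_0,x})_{x\in Q}$ via time-rescaled functions $y^k(t_0+\eps t/T,x)$ together with a truncation and Hodge decomposition (Lemma~\ref{lemma:decomposition}). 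This subtlety lives entirely inside the proof of Theorem~\ref{theorem:2} and does not affect your argument for Proposition~\ref{prop:main}, but you should not present it as a ``standard disintegration''.
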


We are ready to prove Theorem \ref{teo:main}.
\begin{proof}[Proof of Theorem \ref{teo:main}]
Let $(u, F, \nu)$ be a dissipative measure-valued solution according to Definition \ref{def:mvs} and $(\bar u,\bar F)$ a classical solution. For every $\ph\in C^1_c(Q_T,\Rd)$ and $\Phi\in C^1_c(Q_T,\Rdd)$ it holds that
\begin{align}
&\int_Q u^0\cdot\ph(0,\cdot) \,dx + \int_0^T\int_Q u \cdot \partial_t \ph \,dx dt = \int_0^T\int_Q \langle \nu_{t,x}, S\rangle\cdot\nabla\ph \,dx dt\label{eq:main1}\\
&\int_Q F^0\cdot\Phi(0,\cdot) \,dx + \int_0^T\int_Q F\cdot \partial_t \Phi \,dx dt = \int_0^T\int_Q u\cdot \dv\Phi \,dx dt\label{eq:main2},
\end{align}
where $u = \langle\nu, \pi_d\rangle$ and $F = \langle\nu,\pi_{d\times d}\rangle$. Similarly, for the classical solution, it holds that
\begin{align}
&\int_Q \bar u^0\cdot\ph(0,\cdot) dx + \int_0^T\int_Q \bar u \cdot \partial_t \ph \, dx dt = \int_0^T\int_Q  S(\bar F)\cdot\nabla\ph\, dx dt\label{eq:main3}\\
&\int_Q \bar F^0\cdot\Phi(0,\cdot) dx + \int_0^T\int_Q \bar F\cdot \partial_t \Phi \, dx dt = \int_0^T\int_Q \bar u\cdot \dv\Phi \,dx dt\label{eq:main4}.
\end{align}
Of course, by approximation, we may also take test functions which are merely Lipschitz continuous.
Then, subtracting from \eqref{eq:main1}-\eqref{eq:main2} the respective equations for the classical solutions \eqref{eq:main3}-\eqref{eq:main4} and testing with $\ph = \theta(t) \bar u$ and $\Phi = \theta(t) S(\bar F)$, where $\theta$ is a smooth function of time, we infer that
\begin{align}
&\int_Q \theta(0)\left\{(u^0- \bar u^0)\cdot \bar u(0,\cdot) + (F^0 - \bar F^0 )\cdot S(\bar F(0,\cdot))\right\}dx\notag \\
&\qquad\qquad + \int_0^T\int_Q \dot{\theta}\left\{(u - \bar u)\cdot \bar u + (F - \bar F)\cdot S(\bar F)\right\}dxdt\notag\\
&= -\int_0^T\int_Q \theta(t)\left\{(u - \bar u)\cdot \partial_t\bar u + (F - \bar F)\cdot \partial_t S(\bar F)\right\}dxdt \notag\\
&\qquad\qquad + \int_0^T\int_Q\theta(t)\left\{(u - \bar u)\cdot \dv S(\bar F) + (\langle\nu_{t,x}, S\rangle - S(\bar F))\cdot \nabla\bar u\right\}dxdt.\label{eq:main5}
\end{align}
However, the equations for the classical solution say that
\[
\partial_t \bar u  = \dv S(\bar F) \quad \mbox{and} \quad \partial_t S(\bar F)  = D(S(\bar F))\partial_t \bar F = D(S(\bar F))\nabla\bar u.
\]
Substituting back into \eqref{eq:main5}, we get
\begin{align}
&\int_Q \theta(0)\left\{(u^0- \bar u^0)\cdot \bar u(0,\cdot) + (F^0 - \bar F^0 )\cdot S(\bar F(0,\cdot))\right\}dx\notag \\
&+ \int_0^T\int_Q \dot{\theta}\left\{(u - \bar u)\cdot \bar u + (F - \bar F)\cdot S(\bar F)\right\}dxdt\notag\\
&= \int_0^T\int_Q\theta(t)\nabla\bar u\cdot\langle\nu_{t,x}, S(\xi) - S(\bar F) - D(S(\bar F))(\xi - \bar F)\rangle\,dxdt =: \mathcal{R}.\label{eq:main6}
\end{align}
We next make use of the entropy inequality which reads
\begin{equation}\label{eq:main7}
\int_Q\theta(0)\eta(u^0,F^0) + \int_0^T\int_Q \dot{\theta}\left\{\langle\nu_{t,x},\eta\rangle\,dxdt + \gamma(dxdt)\right\}\geq 0
\end{equation}
for all nonnegative functions $\theta\in C^1_c([0,T))$. For the classical solution we also have that
\begin{equation}\label{eq:main8}
\int_Q\theta(0)\eta(\bar u^0,\bar F^0) + \int_0^T\int_Q \dot{\theta}(t)\eta(\bar u,\bar F)\,dxdt = 0.
\end{equation}
%where we recall that $\eta(u,F) = \frac12 |u|^2 + W(F)$. 
%
%Note that the relative entropy $\eta_{\rm rel}$ (respectively, $\eta^0_{\rm rel}$) can be expressed as
%\begin{align*}
%\eta_{\rm rel}(t,x,\lambda,\xi) & = \frac12 (\lambda - \bar u)\cdot(\lambda - \bar u) + W(\xi) - W(\bar F) - S(\bar F)\cdot(\xi - \bar F)\\
%& = - \Big[(\lambda - \bar u)\cdot\bar u + S(\bar F)\cdot(\xi - \bar F)\Big] + \eta(\lambda,\xi) - \eta(\bar u,\bar F),
%\end{align*}
By using the definition of relative entropy, testing with $\theta\in C^1_c([0,T))$ and integrating in space and time, we get that
\begin{align}
&\int_0^T\int_Q \dot{\theta}\left\{\langle\nu_{t,x}, \eta_{\rm rel}(t,x,\lambda,\xi)\rangle\,dxdt +\gamma(dxdt)\right\} + \int_Q \theta(0)\eta^0_{\rm rel}(x,u^0,F^0)\,dx\notag\\
&= - \int_0^T\int_Q\dot{\theta}\,\langle\nu_{t,x}, (\lambda - \bar u)\cdot\bar u + S(\bar F)\cdot(\xi - \bar F)\rangle \,dxdt \notag\\
& \qquad - \int_Q \theta(0) \left\{(u^0 - \bar u^0)\cdot\bar u^0 + S(\bar F^0)\cdot(F^0 - \bar F^0)\right\}\,dx \notag\\
& \quad + \int_0^T\int_Q\dot{\theta}\,\left\{\langle\nu_{t,x},\eta\rangle\,dxdt +\gamma(dxdt)\right\} + \int_Q \theta(0)\eta(u^0,F^0)\,dx \notag\\
&\qquad - \int_0^T\int_Q \dot\theta\, \eta(\bar u,\bar F)\,dxdt - \int_Q \theta(0) \eta(\bar u^0,\bar F^0)\,dx.\label{eq:main9}
\end{align}
By \eqref{eq:main6}, the sum of the first two integrals on the right-hand side of \eqref{eq:main9} are equal to $-\mathcal{R}$, the sum of the third and fourth is positive by \eqref{eq:main7} and the remaining integrals add up to 0 by \eqref{eq:main8}. Therefore, we conclude that
\begin{equation}\label{eq:mainetarel}
\begin{aligned}
&\int_0^T\int_Q \dot{\theta}\left\{\langle\nu_{t,x}, \eta_{\rm rel}(t,x,\lambda,\xi)\rangle\,dxdt +\gamma(dxdt)\right\}\\
&+ \int_Q \theta(0)\eta^0_{\rm rel}(x,u^0,F^0)\,dx \geq -\mathcal{R}.
\end{aligned}
\end{equation}

Next, in order to establish bounds on $\mathcal{R}$, let us consider the function
\begin{align*}
G_S(t,x,\xi) & = S(\xi) - S(\bar F) - DS(\bar F)\cdot(\xi - \bar F) \\
& = \int_0^1(1-s) D^2S(\bar F + s(\xi - \bar F))(\xi - \bar F)\cdot (\xi - \bar F)\, ds.
\end{align*}
We follow a technique similar to that in Lemma \ref{lemma:technical_general} (a). If $|\xi - \bar F|\leq 1$, we find that
\[
|G_S(t,x,\xi)|\leq |D^2S(\bar F + s(\xi - \bar F))(\xi - \bar F)\cdot (\xi - \bar F)|\leq C |\xi - \bar F|^2,
\]
where $C=C(d,\|\bar F\|_\infty)$, due to the fact that $D^2S$ is continuous (recall that $W\in C^3(\Rdd)$) and $\bar F$ is bounded. On the other hand, if $|\xi - \bar F|>1$, we infer that
\begin{align*}
|G_S(t,x,\xi)|&\leq |S(\xi) - S(\bar F)| + |DS(\bar F)||\xi - \bar F|\\
&\leq c(1+|\xi-\bar F|^{p-1})|\xi-\bar F| + c|\xi-\bar F|\leq c|V(\xi-\bar F)|^2,
\end{align*}
since $DS$ is continuous with a $(p-1)$-growth (see assumption (H3)) and $\bar F$ is bounded. This implies that there exists a constant $C>0$ such that
\begin{equation}\label{eq:growthG_S}
|G_S(t,x,\xi)|\leq C|V(\xi - \bar F)|^2.
\end{equation}
Returning to the remainder term $\mathcal{R}$, through \eqref{eq:growthG_S} and the fact that $\nabla\bar u$ is bounded, we get that
\begin{equation}\label{eq:mathcalR}
|\mathcal{R}|\leq C\int_0^T\int_Q\theta(t)\langle\nu_{t,x}, |V(\xi-\bar F)|^2\rangle\,dxdt.
\end{equation}

We are now in a position to deduce the weak-strong uniqueness. Let $(\theta_k)\subset C^1_c([0,T))$ be a bounded sequence, approximating the function
\[
\theta(\tau) = \left\{\begin{array}{cc}
1,&\tau\in[0,t)\\
(t-\tau)/\eps + 1,&\tau\in[t,t+\eps)\\
0,&\tau\in [t+\eps,T)
\end{array}\right.
\]
such that $(\theta_k)$ is nonincreasing and $\dot\theta_k(\tau)\rightarrow\dot\theta(\tau)$ for all $\tau\neq t,t+\eps$. Note that $\dot\theta_k\leq 0$ and, consequently, that $\dot\theta_k\gamma\leq 0$. Then, testing \eqref{eq:mainetarel} with $\theta_k$ we find that
\[
\int_0^T\int_Q|\dot\theta_k| \langle\nu_{\tau,x}, \eta_{\rm rel}\rangle \,dxd\tau \leq \mathcal{R} + \int_Q \eta^0_{\rm rel}(x, u^0,F^0)\,dx.
\]
However, $\nu$ is generated by a sequence with uniformly bounded energy $\eta$ which, combined with \eqref{eq:growthetarel}, implies that the functions
\[
t\mapsto \int_Q  \langle\nu_{t,x}, \eta_{\rm rel}\rangle\,dx \mbox{ and } t\mapsto \int_Q\langle\nu_{t,x}, |V(\xi-\bar F)|^2\rangle\,dx
\]
are integrable (indeed, even bounded). Then, since $\dot\theta_k$ is bounded uniformly in $k$, we may take the limit $k\to\infty$ to infer, by dominated convergence, that
\begin{equation}
\begin{aligned}
&\frac{1}{\eps}\int_t^{t+\eps}\int_Q \langle\nu_{\tau,x}, \eta_{\rm rel}\rangle\,dxd\tau \leq \int_Q \eta^0_{\rm rel}(x, u^0,F^0)\,dx\\
&+ C\int_0^{t+\eps}\int_Q \langle\nu_{\tau,x}, |V(\xi-\bar F)|^2\rangle\,dxd\tau.
\end{aligned}
\end{equation}
Then, by sending $\eps\to0$, for almost all $t\in(0,T)$, we get 
\begin{equation}\label{eq:s1}
\begin{aligned}
\int_Q \langle\nu_{t,x}, \eta_{\rm rel}\rangle\,dx&\leq \int_Q \eta^0_{\rm rel}(x, u^0,F^0)\,dx\\  
                                                                          &+C\int_0^{t}\int_Q \langle\nu_{\tau,x}, |V(\xi-\bar F)|^2\rangle\,dxd\tau.
\end{aligned}
\end{equation}
By Proposition \ref{prop:main} and the hypothesis that $\bar{u}^0=u^0$ and $\bar{F}^0=F^0$, we deduce that for almost all $t\in(0,T)$ and a suitable constant $C>0$, 
\begin{equation}\label{eq:s2}
\begin{aligned}
\int_Q\langle\nu_{t,x},|V(\xi-\bar F(t,x)|^2 &+  |\lambda - \bar u(t,x)|^2\rangle dx\\&\leq C \int_0^{t}\int_Q \langle\nu_{\tau,x}, |V(\xi-\bar F)|^2\rangle\,dxd\tau\\
&\quad +C\int_Q |V(y(t,x)-\bar y(t,x))|^2dx.
\end{aligned}
\end{equation}
In order to apply Gr\"onwall's inequality and conclude the proof, it remains to estimate the last term in the right-hand side of \eqref{eq:s2}. Note that, since $F=\nabla y$ and $\bar F = \nabla \bar{y}$, we find that for any $\phi\in C^{\infty}_{c}([0,T);C^{\infty}(Q))$
\begin{equation}\label{eq:s3}
\int_0^{T}\int_{Q}(\nabla y-\nabla\bar{y})\phi_t-(u-\bar{u})\dv\phi\,dxdt=0.
\end{equation}
Let $\psi\in C^{\infty}_{c}((0,T);C^{\infty}(Q))$ with zero spatial average and consider for any $t\in(0,T)$ the unique solution of the following elliptic problem:
\begin{equation}\label{eq:s4}
\begin{aligned}
-\Delta g(t,x)&=\psi(t,x)\\
\int_{Q}g(t,x)&=0.
\end{aligned}
\end{equation}
Then, taking $\phi=\nabla g$ in \eqref{eq:s3} we infer that 
\begin{equation}\label{eq:s5}
\int_0^{T}\int_{Q}(y-\bar{y})\psi_t+(u-\bar{u})\psi\,dxdt=0.
\end{equation}
Since $\partial_t F\in L^{\infty}(0,T;H^{-1}(Q))$ and $F=\nabla y$ it follows by the definition that $\partial_t y\in L^{\infty}(0,T;L^2(Q))$ and, integrating by parts the time derivative in \eqref{eq:s5}, we get that %$\partial_t y=u$ almost everywhere in $(0,T)\times Q$. In particular,
\begin{equation}\label{eq:s5bis}
\int_0^{T}\int_{Q}(y-\bar{y})_t\psi-(u-\bar{u})\psi\,dxdt=0
\end{equation}
for any $\psi\in C^{\infty}_{c}((0,T);C^{\infty}(Q))$. 
Note that this also implies the relation $\partial_t y=u$ almost everywhere in $Q_T$, giving the equivalence between system \eqref{eq:cl} and the wave equation \eqref{eq:intro_wave}.

By a straightforward density argument, we can test \eqref{eq:s5bis} with the function 
$(y-\bar{y})(1+|y-\bar{y}|^{p-2})-((y-\bar{y})|y-\bar{y}|^{p-2})_{Q}$, where we recall that $(\cdot)_{Q}$ denotes the spatial average over the cube $Q$. 
%-\bar{(y-\bar{y})|y-\bar{y}|^{p-2}}$.
Indeed, since $d=2,3$, using the Sobolev embedding we infer that $(y-\bar{y})(1+|y-\bar{y}|^{p-2})\in L^{\infty}(0,T; L^2(Q))$, see \eqref{eq:sob} below. Then, 
\begin{equation}\label{eq:s6}
\begin{aligned}
&\frac{d}{dt}\int_Q\frac{|y(t,x)-\bar{y}(t,x)|^{2}}{2}+\frac{|y(t,x)-\bar{y}(t,x)|^{p}}{p}\,dx\\
&\leq \int_{Q}|u(t,x)-\bar{u}(t,x)|\,|y(t,x)-\bar{y}(t,x)|\,dx\\
                                                    &+\int_{Q}|u(t,x)-\bar{u}(t,x)|\,|y(t,x)-\bar{y}(t,x)|^{p-1}\,dx,
\end{aligned}
\end{equation}
and the terms involving the average $((y-\bar{y})|y-\bar{y}|^{p-2})_Q$ vanish due to the fact that $u - \bar{u}$ and $y - \bar{y}$ have zero spatial average.
Next, integrate in time and apply Young's inequality to get that for almost all $t\in(0,T)$
\begin{equation}\label{eq:e5}
\begin{aligned}
\int_Q\frac{|y(t,x)-\bar{y}(t,x)|^{2}}{2}+\frac{|y(t,x)-\bar{y}(t,x)|^{p}}{p}\,dx&\leq\int_{0}^{t}\int_{Q}|u(\tau,x)-\bar{u}(\tau,x)|^{2}\,dxd\tau\\
&+\int_{0}^{t}\int_{Q}\frac{|y(\tau,x)-\bar{y}(\tau,x)|^{2}}{2}\,dxd\tau\\
&+\int_{0}^{t}\int_{Q}\frac{|y(\tau,x)-\bar{y}(\tau,x)|^{2p-2}}{2}\,dxd\tau.
\end{aligned}
\end{equation}
Since $p\geq2$, by Sobolev embeddings, for a.e. $\tau\in(0,T)$ it holds that  
\begin{equation}\label{eq:sob}
\|y(\tau)-\bar{y}(\tau)\|_{2p-2}\leq C(\|y(\tau)-\bar{y}(\tau)\|_{p}+\|\nabla y(\tau)-\nabla\bar{y}(\tau)\|_{p}),
\end{equation}
for some constant $C$ possibly depending on $p$, $d$ and the measure of $Q$.  
Then, since $2p-2\geq p$, we have that  
\begin{equation*}
\begin{aligned}
\int_{0}^{t}\|y(\tau)-\bar{y}(\tau)\|_{2p-2}^{2p-2}\,d\tau&\leq C\int_{0}^{t}\|y(\tau)-\bar{y}(\tau)\|_{p}^{2p-2}\,d\tau
                                                                         +C\int_{0}^{t}\|\nabla y(\tau)-\nabla\bar{y}(\tau)\|_{p}^{2p-2}d\tau\\
                                                                         &\leq C\sup_{\tau\in(0,T)}\|y(\tau)-\bar{y}(\tau)\|_{p}^{p-2}\int_{0}^{t}\|y(\tau)-\bar{y}(\tau)\|_{p}^{p}d\tau\\
                                                                         &+ C\sup_{\tau\in(0,T)}\|\nabla y(\tau)-\nabla\bar{y}(\tau)\|_{p}^{p-2}\int_{0}^{t}\|\nabla y(\tau)-\nabla\bar{y}(\tau)\|_{p}^{p}\\
                                                                         &\leq C\int_{0}^{t}\|y(\tau)-\bar{y}(\tau)\|_{p}^{p}
                                                                         +C\int_{0}^{t}\|\nabla y(\tau)-\nabla\bar{y}(\tau)\|_{p}^{p},
\end{aligned}
\end{equation*}
where in the last line we used the bounds on $y$ and $\bar{y}$ in $L^{\infty}(0,T;W^{1,p}(Q))$ and the fact that they have zero average. 
Then, from \eqref{eq:e5}, we infer that for almost all $t\in(0,T)$ 
\begin{equation}\label{eq:s7}
\begin{aligned}
\int_Q\frac{|y(t,x)-\bar{y}(t,x)|^{2}}{2}+\frac{|y(t,x)-\bar{y}(t,x)|^{p}}{p}&\leq C\int_{0}^{t}\int_{Q}|u(\tau,x)-\bar{u}(\tau,x)|^{2}\\
                                                                                                           &+C\int_{0}^{t}\int_{Q}|y(\tau,x)-\bar{y}(\tau,x)|^{2}\\
                                                                                                           &+C\int_{0}^{t}\int_{Q}|y(\tau,x)-\bar{y}(\tau,x)|^{p}\\
                                                                                                           &+C\int_0^t\int_Q |F(\tau,x)-\bar{F}(\tau,x)|^{p}.
\end{aligned}
\end{equation}
By recalling the definition of $V$, Jensen's inequality now gives 
\begin{equation}\label{eq:s8}
\begin{aligned}
\int_{Q}|V(y(t,x)-\bar{y}(t,x))|^{2}\,dx\leq C\int_0^{t}\int_Q \langle\nu_{\tau,x}, |V(\xi-\bar F)|^2 + |\lambda - \bar u|^2 \rangle\,dxd\tau\\
                                                               C\int_0^{t}\int_Q|V(y(\tau,x)-\bar{y}(\tau,x))|^{2}\,dxd\tau.
\end{aligned}
\end{equation}
Adding the term 
\[
\int_{Q}|V(y(t,x)-\bar{y}(t,x))|^{2}\,dx
\]
to both sides of \eqref{eq:s2} and using \eqref{eq:s8}, 
equation \eqref{eq:s2} now reads 
\begin{align*}
&\int_Q\big[\langle\nu_{t,x},|V(\xi-\bar F)|^2 + |\lambda - \bar u|^2\rangle +|V(y - \bar y)|^2 \big]dx\\
&\leq C\int_0^t\int_Q \big[ \langle\nu_{\tau,x}, |V(\xi-\bar F)|^2 + |\lambda - \bar u|^2\rangle + |V(y - \bar y)|^2\big]\,dxd\tau,
\end{align*}
which, by Gr\"onwall's inequality, implies that the Young measure must collapse to a Dirac mass, i.e. $\nu = \delta_{(\bar u,\bar F)}$ a.e. and $y=\bar y$. Moreover, returning to \eqref{eq:main7} and using \eqref{eq:main8}, we also deduce that
\[
\int_0^T\int_Q \dot\theta\,\gamma(dxdt) \geq 0
\]
for all nonnegative $\theta\in C^1_c([0,T))$ and hence $\gamma = 0$. This concludes the proof.
\end{proof}
%%%%%%%%%%%%%%%%%%%%%%%%%%%%%%%%%%%%%%%%%%%%%%%%%%%%%%%%%%%%%%%%%%%%%%%
%%%%%%%%%%%%%%%%%%%%%%%%%%%%%%%%%%%%%%%%%%%%%%%%%%%%%%%%%%%%%%%%%%%%%%%
%%%%%%%%%%%%%%%%%%%%%%%%%%%%%%%%%%%%%%%%%%%%%%%%%%%%%%%%%%%%%%%%%%%%%%%
\section{A G\aa rding-type inequality for quasiconvex functions}\label{sec:propproof}
The proof of the weak-strong uniqueness result, Theorem \ref{teo:main}, was based on Proposition \ref{prop:main}. This proposition is a simple consequence of a more general result, Theorem \ref{theorem:2} below, which forms the main part of Section \ref{sec:propproof}. We note that Theorem \ref{theorem:2} is independent of the equations and it is of interest in its own right. It essentially states that, on smooth maps, quasiconvexity behaves like an integral version of convexity and it is the result which allows us to adapt the relative entropy method to the quasiconvex setting.

We denote by 
\[
\mathcal{F}_K:=\{H\in W^{1,\infty}(\overline{Q},\Rdd)\,:\, \|H\|_{W^{1,\infty}}\leq K\},
\]
and by $C(f,K)$ a positive constant that depends only on the $L^\infty$ bounds of a function $f$ or any of its derivatives in a ball determined by $K$. Next, for $f:\Rdd\to\R$, we define the function $G_f: \Rdd\times \Rdd\to\RR$ by
\begin{equation}\label{eq:g}
G_f(z,\xi):=f(z + \xi) - f(z) - Df(z):\xi =\int_0^1 (1-s)D^2f(z+s\xi)\xi:\xi\,ds.
\end{equation}

%For $(t,x)\in \overline{Q}_T$, let $\bar F=\nabla \bar y \in C^0(\overline{Q}_T,\Rdd)$ for some $\bar y$ with zero spatial average and define the function $G: \overline{Q}_T \times \Rdd\to\RR$ by
%\begin{align*}
%G(t,x,\xi)&:=W(\bar F(t,x) + \xi) - W(\bar F(t,x)) - S(\bar F(t,x))\cdot\xi\\
%&=\int_0^1 (1-s)D^2W(\bar F(t,x)+s\xi)\xi\cdot\xi\,ds,
%\end{align*}

\begin{Theorem}\label{theorem:2}
Assume that $W\in C^2(\Rdd)$ is strongly quasiconvex and satisfies a $p$-coercivity and growth, i.e. for all $\xi\in\Rdd$
\[
c(-1 + |\xi|^p) \leq W(\xi)\leq c(1+|\xi|^p).
\] 
In addition, let $\nu^F = (\nu^F_{t,x})_{(t,x)\in Q_T}$ be a family of probability measures generated by a sequence of spatial gradients $(\nabla y^k)$ such that
\begin{align*}
&(y^k)\mbox{ is bounded in } L^\infty(0,T; W^{1,p}(Q))\cap L^{\infty}(0,T; H^{1}_{0}(Q)) \\
&(\partial_t \nabla y^k)\mbox{ is bounded in } L^\infty(0,T;H^{-1}(Q))
\end{align*}
and write $\nabla y = \langle\nu^F,{\rm id}\rangle$ for its centre of mass.
Then, for almost all $t_0\in(0,T)$,
\begin{multline}\label{eq:step1bis}
\int_Q\langle\nu^F_{t_0,x},|V(\xi-\bar F(t_0,x)|^2 \rangle dx \\
\leq C_1\int_Q  \langle\nu^F_{t_0,x},G_W(\bar F(t_0,x), \xi - \bar F(t_0,x))\rangle \, dx +  C_0 \int_Q |V(y(t_0,x)-\bar y(t_0,x))|^2dx.
\end{multline}
\end{Theorem}

\begin{remark}\label{remark:section5}
\begin{enumerate}
\item Inequality \eqref{eq:step1bis} can be seen as a G\aa rding-type inequality for quasiconvex functions. In fact, it should be contrasted with \cite[Lemma 4.3]{dafermos_involutions} where a similar inequality is established in the case of (strongly) rank-one convex functions. The crucial difference here is that, unlike \cite{dafermos_involutions}, there is no need to assume a condition of small local oscillations. In \cite{dafermos_involutions}, the need for this assumption arises when `delocalising' the strong ellipticity condition, i.e. rank-one convexity, from a fixed $x\in\mathbb{R}^d$ to a cube in $\mathbb{R}^d$. In the present work, we also need to delocalise the strong quasiconvexity condition in the same way, however, we are able to achieve this through a strategy developed by Kristensen and Campos Cordero, see \cite{JCCVMO} and also Campos Cordero and Koumatos \cite{JCCKK} using an idea of K. Zhang \cite{Zhang92} showing that smooth extremals of strongly quasiconvex energies are minimisers with respect to spatially localised variations.%, see Step 1 in the proof of Theorem \ref{theorem:2}.

\item We point out that the assumption on the time derivative $(\partial_t \nabla y^k)$ being bounded in\\ $L^{\infty}(0,T;H^{-1}(Q))$ is used in order to infer the strong convergence of $(y^k)$ in $L^{p}((0,T)\times Q)$ and obtain Lemma \ref{lemma:decomposition} (3) which is crucial. Theorem \ref{theorem:2} can equivalently be stated under the assumption that $y^k\to y$ in $L^{p}((0,T)\times Q)$.
\item Note the relaxed assumptions on $W$. In particular, there is no need to invoke the assumed growth of $D^2W$. This is only required in order to bound the term $\mathcal{R}$ in the proof of Theorem \ref{teo:main}. The same remark goes for the regularity of $W$ which may be assumed to be $C^2$ throughout Section \ref{sec:propproof}.
\end{enumerate}
\end{remark}
We are now in a position to prove Proposition \ref{prop:main}.

\begin{proof}[Proof of Proposition \ref{prop:main}]
Note that the proof of part (b) is an immediate consequence of Lemma \ref{lemma:technical_general} part (a) below (see also Remark \ref{rem:G}).
As for part (a), let $(u,F,\nu)$ be a measure-valued solution as in Definition \ref{def:mvs} and let $(u^k,F^k)$ be a generating sequence, where $F^k = \nabla y^k$ must hence satisfy the assumptions of Theorem \ref{theorem:2}.

Simply note that whenever $g:Q_T\times\Rd\times \Rdd \to \RR$ is a function that admits an additive decomposition
\[
g(t,x,\lambda,\xi) = g_d(t,x,\lambda) + g_{d\times d}(t,x,\xi),
\]
where 
\[
|g_d|\leq c(1+|\cdot |^2)\mbox{ and } |g_{d\times d}|\leq c(1+|\cdot |^p),
\]
the action of $\nu$ is equivalent to the action of $\nu^u\otimes \nu^F$ where $\nu^u$, $\nu^F$ are the measures generated by $(u^k)$ and $(F^k)$ respectively. Hence, it suffices to prove \eqref{eq:step1bis} and then simply add the term
\[
\int_Q \langle\nu^u_{t_0,x}, |\lambda - \bar u(t_0,x)|^2\rangle \,dx
\]
to conclude the proof of Proposition \ref{prop:main}.
\end{proof}

Next, we present a series of lemmas which are crucial for the proof of Theorem \ref{theorem:2}. Lemma \ref{lemma:technical_general} below provides some crucial properties of $G$. The proof of (a) and (b), originating in \cite{AcFus87}, is based on \cite{JCCVMO,JCCKK}, whereas (c) on \cite{GM}. 

\begin{lemma}\label{lemma:technical_general}
Let $f\in C^2(\Rdd)$ such that
\[
|f(\xi)|\leq C(1+|\xi|^p)\mbox{ and }|Df(\xi)|\leq C(1+|\xi|^{p-1}).
\]
%\begin{itemize}
%\item[(H1)] 
%$f$ has a $p$-growth and $Df$ a $(p-1)$-growth, i.e.
%$
%|f(\xi)|\leq c(1+|\xi|^p)
%$
%%\item[(H2)] 
%
%%$f$ is $p$-coercive, i.e.
%$$
%f(\xi)\geq -d + c|\xi|^p\mbox{ and }|Df(\xi)|\leq c(1+|\xi|^{p-1}).
%$$
%%\end{itemize}
%
Then, the following hold:

\begin{itemize}
\item[(a)] There exists $C=C(f,K)$ such that for all $z\in \overline{B(0,K)}$, $\xi_1,\xi_2\in\Rdd$
\[
|G_f(z,\xi_1) - G_f(z,\xi_2)| \leq C (|\xi_1| + |\xi_2| + |\xi_1|^{p-1}+|\xi_2|^{p-1})|\xi_1 - \xi_2|.
\]
In particular, $|G_f(z,\xi)| \leq C|V(\xi)|^2$.
\item[(b)] For every $\delta>0$ there exists $R = R(\delta,f,K)>0$ such that for all $z_1,\,z_2\in \overline{B(0,K)}$ with $|z_1-z_2|<R$, it holds that
\[
|G_f(z_1,\xi) - G_f(z_2,\xi)| \leq \delta |V(\xi)|^2.
\]
\item[(c)] If in addition $f(\xi)\geq -d+c|\xi|^p$, there exist constants $C=C(f,K)$, $\tilde{C}=\tilde{C}(f,K)$ such that for all $z\in \overline{B(0,K)}$
\[
G_f(z,\xi) \geq C |\xi|^p - \tilde{C}|\xi|^2.
\]
\item[(d)] If $f$ is also strongly convex, i.e. $D^2f(z) \xi:\xi\geq \gamma |\xi|^2$, then there exists $C=C(f,K)$ such that for all $z\in \overline{B(0,K)}$,
\[
G_f(z,\xi) \geq C|V(\xi)|^2.%,\mbox{ for some $C=C(f,K)>0$.}
\]
\end{itemize}
\end{lemma}

\begin{proof}
For the proof of (a), if $|\xi_1| + |\xi_2| \leq 1$ and 
%by adding and subtracting the terms 
%\[
%\int_0^1 (1-s)D^2f(z + s\xi_1) \xi_1:\xi_2\,ds,\quad \int_0^1 (1-s)D^2f(z + s\xi_2) \xi_1:\xi_2\,ds,
%\]
for all $|z|\leq K$,
\begin{align*}
&|G_f(z,\xi_1) - G_f(z,\xi_2)| \leq \int_0^1 \left| D^2f(z + s \xi_1)\xi_1:(\xi_1 - \xi_2) \right| \\
&+\int_0^1 \left| \left[D^2f(z + s \xi_1) - D^2f(z + s \xi_2)\right]\xi_1:\xi_2 \right|+\int_0^1 \left| D^2f(z + s \xi_2)\xi_2:(\xi_1 - \xi_2) \right|  \\
&\quad\quad \leq  C(f,K)  (|\xi_1| + |\xi_2|)|\xi_1 - \xi_2|.
\end{align*}
%since, for $|\xi_1| + |\xi_2| \leq 1$, it holds that $ |\xi_1| |\xi_2| \leq |\xi_1| + |\xi_2| $.

On the other hand, if  $|\xi_1| + |\xi_2|>1$, note that
\begin{align*}
G_f(z,\xi_1) - G_f(z,\xi_2) & = \int_0^1 Df(z + \xi_2 + s(\xi_1-\xi_2)):(\xi_1 - \xi_2)\,ds - Df(z):(\xi_1 - \xi_2).
\end{align*}
We may thus estimate
\begin{align*}
\left|G_f(z,\xi_1) - G_f(z,\xi_2) \right| & \leq c(1 + |z|^{p-1} + |\xi_1|^{p-1} + |\xi_2|^{p-1})|\xi_1 - \xi_2| \\
&+ c(1+|z|^{p-1})|\xi_1 - \xi_2|\\
%& \leq C(K) (1 + |\xi_1|^{p-1} + |\xi_2|^{p-1})|\xi_1 - \xi_2|\\
&\leq C(K)( |\xi_1| + |\xi_2| + |\xi_1|^{p-1} + |\xi_2|^{p-1})|\xi_1 - \xi_2|,
\end{align*}
since $|\xi_1| + |\xi_2| > 1$ and $|z|\leq K$. This completes the proof of (a).

Concerning (b), again we split into two cases. If $|\xi|\leq 1$, by \eqref{eq:g}
\begin{equation*}
|G_f(z_1,\xi) - G_f(z_2,\xi)|
% \leq \int_0^1 \left|D^2f(z_1 + s\xi) - D^2f(z_2 + s \xi) \right| |\xi|^2\,ds\\
\leq C(f,K) \left| z_1-z_2\right||\xi|^2,
\end{equation*}
 whereas, if $|\xi|>1$,
\begin{align*}
|G_f(z_1,\xi) - G_f(z_2,\xi)|& \leq \left|f(z_1 + \xi) - f(z_2 + \xi) \right| + \left|f(z_1) - f(z_2) \right| \\
 & \quad +\left|Df(z_1) - Df(z_2) \right||\xi|\leq C(f,K) |z_1- z_2| |V(\xi)|^2.
%&\leq c(1 + |z_1|^{p-1} + |z_2|^{p-1} + |\xi|^{p-1}) \left|z_1 - z_2 \right|\\
%& \quad + C(f,K)  \left|z_1 - z_2 \right| + C(f,K)  \left|z_1 - z_2 \right||\xi|\\
%& \leq C(f,K) (1 + |\xi| + |\xi|^{p-1}) \left|z_1  -z_2 \right|\\
\end{align*}
Hence, given $\delta>0$ we may choose $R \leq \delta/ C(f,K)$.

Regarding (c), we follow \cite[Section 3.2]{GM}. If $|\xi| \leq 1$, we can find $C=C(f,K)>0$ such that
\begin{align*}
G_f(z,\xi) & = \int_0^1(1-s) D^2 f(z + s \xi)\,ds\, \xi : \xi \geq - C |\xi|^2 \geq |\xi|^p - (C+1)|\xi|^2.
\end{align*}
On the other hand, if $|\xi|> 1$, by coercivity, we get
\[
G_f(z,\xi) \geq d_1|\xi|^p - d_2(f,K) - d_3(f,K)|\xi| \geq d_1|\xi|^p - (d_2+d_3)|\xi|^2,
\]
concluding the proof of (c).

For the proof of (d), by Young's inequality,
\begin{align*}
G_f(z,\xi) %& \geq c|z+\xi|^p - C(f,K) - C(\delta)| |Df(z)|^q - \delta |\xi|^p\\
 & \geq c|\xi|^p - C(f,K,\delta) -  \delta |\xi|^p  \geq \tilde{c}|\xi|^p - C(f,K,\delta),
\end{align*}
for $\delta$ small enough. Hence, if $|\xi|^p \geq 2C(f,K,\delta)/\tilde{c} + 1:= R^p$, we deduce that
\[
G_f(z,\xi) \geq \frac{\tilde{c}}{2}|\xi|^p \geq \frac{\tilde{c}}{4}|V(\xi)|^2,
\]
as $|\xi|\geq 1$. On the other hand, for $|\xi| < R$, the strong convexity gives
%\[
%\frac{|\xi|^2}{R^2} \geq \frac{|\xi|^p}{R^p},
%\]
%and thus, by strong convexity,
\begin{align*}
G_f(z,\xi) %& = \int_0^1(1-s) D^2f(z+s\xi)\xi:\xi\,ds \\
& \geq \frac{1}{2}\gamma |\xi|^2 \geq \frac{1}{4}\gamma |\xi|^2 + \frac{R^2}{4}\gamma \frac{|\xi|^2}{R^2} \geq  \frac{1}{4}\gamma |\xi|^2 + \frac{R^2}{4R^p}\gamma |\xi|^p\geq \tilde{c} |V(\xi)|^2.
\end{align*}
Combining the two cases, we infer the result.
\end{proof}

\begin{remark}\label{rem:G}
 Letting $f=W$, $\xi_2 = 0$, $z = \bar F(t,x)$ and $\xi_1 = \xi - \bar F(t,x)$, Lemma \ref{lemma:technical_general} (a) implies that for almost all $(t,x)\in Q_T$ and all $(\lambda,\xi)\in \mathbb{R}^d\times\mathbb{R}^{d\times d}$,
\[
|\eta_{\rm rel}(t,x,\lambda,\xi)| \leq C \left( |\lambda - \bar u|^2 + |V(\xi - \bar F)|^2\right).
\]
Similarly, $|\eta^0_{\rm rel}(x,\lambda,\xi)| \leq C \left(|\lambda - \bar u^0|^2 + |V(\xi - \bar F^0)|^2\right)$.
%\[
%|\eta^0_{\rm rel}(x,\lambda,\xi)| \leq C \left(|\lambda - \bar u^0|^2 + |V(\xi - \bar F^0)|^2\right).
%\]
\end{remark}

Next, we present another simple, yet crucial, lemma.

\begin{lemma}
\label{lemma:tWall}
There exists a constant $c_2 = c_2(W,K)$ such that the function
\[
\tW(\xi) : = W(\xi) -  c_2 |V(\xi)|^2
\]
is $p$-coercive and satisfies the following:
\begin{itemize}
\item[(a)] $\tW$ is strongly quasiconvex with constant $c_0/2$ at all $\xi\in \overline{B(0,K)}$, i.e.
\[
\int_Q\tW(\xi+\nphi) - \tW(\xi) \geq \frac{c_0}{2} \int_Q |V(\nphi)|^2,\quad\forall\,|\xi|<K,\forall\,\varphi\in W^{1,p}(Q).
\]
\item[(b)] For all $\xi\in \overline{B(0,K)}$ and all $Q^\prime\subset Q$ it holds that
\begin{equation}\label{eq:qc_rc}
\int_{Q^\prime} D^2\tW(\xi) \nphi:\nphi \geq c_0 \int_{Q^\prime} |\nphi|^2\quad\forall\,\varphi\in W^{1,p}_0(Q^\prime).
\end{equation}
\end{itemize}
\end{lemma}

\begin{proof}
The $p$-coercivity of $\tW$ follows from that of $W$. Part (a) follows by applying Lemma \ref{lemma:technical_general} (a) to the function $f(\xi) = |V(\xi)|^2$, while (b) by viewing quasiconvexity as a minimality condition and considering the second variation.
\end{proof}

The following result which can be viewed as a G\aa rding inequality itself is inspired by Dafermos \cite[Lemma 4.3]{dafermos_involutions}.

\begin{proposition}\label{prop:1}
There exists $c_1 = c_1(W,K)>0$ such that for any $H\in\mathcal{F}_K$
\[
\int_{Q} D^2\tW(H(x))\nphi:\nphi \geq \frac{c_0}{2} \int_Q |\nphi|^2 - c_1 \int_Q|\varphi|^2,\quad\forall \varphi\in W^{1,2}(Q).
\]
\end{proposition}

\begin{proof}
Fix $\delta>0$ and a finite cover $\{Q_i\}\subset Q$, $Q_i=Q_i(x_i,r_i)$, such that
\[
|D^2\tW(H(x)) - D^2\tW(H(x_i))| \leq c_0 \delta(1-\delta)^2.
\]
Since $H\in\mathcal{F}_K$ and $\tW\in C^2(\Rdd)$, the cover can be chosen uniformly for $H\in\mathcal{F}_K$.
% it follows that
%\[
%|D^2\tW(H(x)) - D^2\tW(H(x_i))| \leq C(\tW,K) |H(x) - H(x_i)|\leq C(W,K) K |x-x_i|
%\]
%and the cover can be chosen uniformly for $H\in\mathcal{F}_K$.
Next, choose a partition of unity $\{\rho_i\}$ subordinate to the cover $\{Q_i\}$ such that ${\rm supp}\,\rho_i\subset Q_i$ and $\sum_i \rho_i^2 = 1$.
%\[
%\sum_i \rho_i^2 = 1.
%\]
Given $\varphi\in W^{1,2}(Q)$, we find that for all $H\in\mathcal{F}_K$,
\begin{align}\label{eq:1}
\int_{Q} D^2\tW(H(x))\nphi:\nphi&= \sum_i \int_{Q_i}\!\!\rho_i^2 D^2\tW(H(x_i))\nphi:\nphi\nonumber\\
& + \sum_i \int_{Q_i}\!\!\rho_i^2 \left[D^2\tW(H(x)) - D^2\tW(H(x_i))\right]\nphi:\nphi \nonumber\\
& \geq \sum_i \int_{Q_i}\!\!\! D^2\tW(H(x_i))(\rho_i\nphi)\!:\!(\rho_i\nphi)
- c_0\delta(1-\delta)^2\!\!\! \int_{Q}|\nphi|^2. 
\end{align}
%so that, by the choice of the cover, and for all $H\in\mathcal{F}_K$,
%\begin{align}
%\int_{Q} D^2\tW(H(x))\nphi:\nphi & \geq \sum_i \int_{Q_i} D^2\tW(H(x_i))(\rho_i\nphi):(\rho_i\nphi)\nonumber\\
%& \quad - c_0\delta(1-\delta)^2 \int_{Q}|\nphi|^2. 
%\label{eq:1}
%\end{align}
Note that $\rho_i\nphi = \nabla(\rho_i\varphi) - \varphi\otimes \nabla\rho_i$ with
%\[
%\rho_i\nphi = \nabla(\rho_i\varphi) - \varphi\otimes \nabla\rho_i,
%\]
$\rho_i\varphi \in W^{1,2}_0(Q_i)$ and $|H(x_i)|\leq K$. Then, by \eqref{eq:qc_rc} and Young's inequality, we infer that
%\begin{align} \label{eq:2}
%\int_{Q_i} D^2\tW(H(x_i))\nabla(\rho_i\varphi):\nabla(\rho_i\varphi) & \geq c_0 \int_{Q_i} |\nabla(\rho_i\varphi)|^2,\nonumber\\
%\,&\, \\
%\int_{Q_i} | \varphi\otimes \nabla\rho_i|^2 & \leq C(\|\nabla\rho_i\|_\infty)\int_{Q_i}|\varphi|^2 \nonumber
%\end{align}
%
%Then,
\begin{align}\label{eq:3}
\int_{Q_i}\rho_i^2 D^2\tW(H(x_i))\nphi:\nphi %& \geq c_0 \int_{Q_i}|\nabla(\rho_i\varphi)|^2 - C\int_{Q_i}|\varphi|^2\\
%& \quad\quad - 2 \int_{Q_i} D^2\tW(H(x_i))\nabla(\rho_i\varphi):(\varphi\otimes \nabla\rho_i)\\
&\geq c_0(1-\delta) \int_{Q_i}|\nabla(\rho_i\varphi)|^2 - C\int_{Q_i} |\varphi|^2,
\end{align}
where $C=C(\tW,K,\delta)$.
Through Young's inequality we also find that
%\begin{align*}
%|\nabla(\rho_i\varphi)|^2 %& = \rho_i^2|\nphi|^2 + |\varphi\otimes \nabla\rho_i|^2 + 2\rho_i\nphi:(\varphi\otimes \nabla\rho_i)\\
%& \geq (1-\delta)\rho_i^2|\nphi|^2 - C(\delta)|\varphi|^2,
%\end{align*}
%where $C(\delta)$ also depends on $\|\nabla\rho_i\|_{\infty}$ and combining with \eqref{eq:3},
\[
\int_{Q_i}\rho_i^2 D^2\tW(H(x_i))\nphi:\nphi \geq c_0(1-\delta)^2\int_{Q_i} \rho_i^2|\nabla\varphi|^2 - C(\delta)\int_{Q_i}|\varphi|^2,
\]
where $C(\delta)$ also depends on $\|\nabla\rho_i\|_{\infty}$, in turn depending only on $\delta$ and $W$.
Then, after summing up, \eqref{eq:1} results in
\begin{align*}
\int_{Q} D^2\tW(H(x))\nphi:\nphi %& \geq c_0(1-\delta)^2\int_{Q} |\nabla\varphi|^2 - C\int_{Q}|\varphi|^2 \\
%& \quad\quad - c_0\delta(1-\delta)^2\int_Q  |\nabla\varphi|^2\\
& \geq c_0(1-\delta)^3\int_{Q} |\nabla\varphi|^2 - C(\delta)\int_{Q}|\varphi|^2.
\end{align*}
To conclude the proof, fix $\delta = 1-2^{-1/3}$ and rename $C=C(W,K)=:c_1$.
\end{proof}

We next present Proposition \ref{prop:2} which is used repeatedly in the proof of Theorem \ref{theorem:2}. We note that, for $C^2$ functions with a $p$-growth, this is an equivalent characterisation of strong quasiconvexity.

\begin{proposition}\label{prop:2}
Let $\left(H_k\right)\subset \mathcal{F}_K$, $(h_k)\subset W^{1,p}(Q)$, $(a_k)\subset \R$ such that
\begin{equation*}
\begin{aligned}
&a_k^{-1}V(h_k) \rightarrow 0\mbox{ strongly in }L^2(Q),
&\left(a_k^{-1}V(\nabla h_k\right))\mbox{ is bounded in }L^2(Q).
\end{aligned}
\end{equation*}
Then,
\[
\liminf_{k\to\infty} \frac{c_0}{4} a_k^{-2}\int_Q |V(\nabla h_k)|^2 \leq \liminf_{k\to\infty} a_k^{-2}\int_Q \tGF(H_k(x),\nabla h_k).
\]
\end{proposition}

\begin{proof}
The proof is identical to \cite[Proposition 4.6]{JCCKK}, noting that there is no dependence on the lower order terms $(h_k)$ in $W$ and thus no assumptions on $(h_k)$ are required. Here, we repeat the argument for completeness.
Letting $\delta = c_0/4$ in Lemma \ref{lemma:technical_general} (b), we find $R=R(c_0,\tW,K)$ such that for all $H\in\mathcal{F}_K$ and whenever $|x-x_0|<R$
\[
\tGF(H(x),\xi) \geq \tGF(H(x_0),\xi) - \frac{c_0}{4} |V(\xi)|^2.
\]
In particular, for $\varphi\in W^{1,p}_0(Q(x_0,R))$,
\begin{align}\label{eq:Zhang}
\int_{Q(x_0,R)} \tGF(H(x),\nphi) %& \geq \int_{Q(x_0,R)} \tGF(H(x_0),\nphi) - \frac{c_0}{4}|V(\nphi)|^2\nonumber \\
%& = \int_{Q(x_0,R)} \tW(H(x_0)+\nphi) - \tW(H(x_0)) -\frac{c_0}{4}|V(\nphi)|^2 \\
%&\quad\quad - \int_{Q(x_0,R)}D\tW(H(x_0)):\nphi\\
 &\geq \frac{c_0}{4}\int_{Q(x_0,R)}|V(\nphi)|^2,
\end{align}
by Lemma \ref{lemma:tWall} (b), i.e. that $\tW$ is strongly quasiconvex, and the fact that $\int_{Q(x_0,R)} D\tW(H(x_0)):\nphi = 0$.
Next, note that since $\left(a_k^{-1}V(\nabla h_k)\right)$ is bounded in $L^2(Q)$ we may assume that
\[
a_k^{-2} |V(\nabla h_k)|^2 \mathcal{L}^d\llcorner Q \overset{\ast}\rightharpoonup \mu,\quad\mbox{in }\mathcal{M}(\overline{Q}) = \left(C(\overline{Q})\right)^\ast.
\]
Since $\mu$ is a positive measure, 
%there can be at most a countable number of hyperplanes parallel to the coordinate axes which admit non-null $\mu$-measure. Hence, 
we can find a finite cover of $Q$ by cubes $Q(x_j,r_j)$ with the property that $r_j<R$, so that \eqref{eq:Zhang} applies, and that
\begin{equation}\label{eq:muzero}
\mu(\overline{Q}\cap \partial Q(x_j,r_j)) = 0.
\end{equation}
We consider cut-off functions $\rho_j \in C^\infty_c(Q(x_j,r_j))$ such that for $\lambda\in(0,1)$, $\rho_j \equiv 1$ in $Q(x_j,\lambda r_j)$, $\rho_j \equiv 0$, on $\partial Q(x_j,r_j)$ and $\|\nabla\rho_j\|_{L^{\infty}(Q)}\leq C/(1-\lambda)$.
By \eqref{eq:Zhang}, for $\varphi\in W^{1,p}(Q)$ and thus $\rho_j\varphi\in W^{1,p}_0(Q(x_j,r_j))$, we find that
\begin{multline*}
\frac{c_0}{4}\int_{Q(x_j,\lambda r_j)}|V(\nphi)|^2 + \frac{c_0}{4}\int_{Q(x_j,r_j)\setminus Q(x_j,\lambda r_j)}|V(\nabla(\rho_j\varphi))|^2 \\
\leq \int_{Q(x_j,\lambda r_j)} \tGF(H(x),\nphi)+ \int_{Q(x_j,r_j)\setminus Q(x_j,\lambda r_j)}\tGF(H(x),\nabla(\rho_i\varphi)) \\
\leq \int_{Q(x_j,\lambda r_j)} \tGF(H(x),\nphi)+ C\int_{Q(x_j,r_j)\setminus Q(x_j,\lambda r_j)}|V(\nabla(\rho_i\varphi))|^2
\end{multline*}
where by Lemma \ref{lemma:technical_general} (a), $C=C(\tW,K)$. Using Lemma \ref{lemma:technical_general} (a) and summing over $j$, after setting 
$\varphi = h_k$, $H = H_k$, dividing by $a_k^{-2}$ and taking the liminf, we obtain
\begin{multline*}
\liminf_{k\to\infty} \frac{c_0}4 a_k^{-2} \int_Q |V(\nabla h_k)|^2 \leq \liminf_{k\to\infty} a_k^{-2} \int_Q\tGF( H_k(x),\nabla h_k)\\
+ C\limsup_{k\to\infty} \sum_j \int_{Q(x_j,r_j)\setminus Q(x_j,\lambda r_j)} a_k^{-2}|V(\nabla h_k)|^2 + a_k^{-2}\left| V\left(\frac{h_k}{1-\lambda}\right)\right|^2.
\end{multline*}
However, $a_k^{-1}V(h_k)\rightarrow 0$ in $L^2(Q)$ and $a_k^{-2} |V(\nabla h_k)|^2 \mathcal{L}^d\llcorner Q \overset{\ast}\rightharpoonup \mu$ in $\mathcal{M}(Q)$, so that
\begin{multline*}
\liminf_{k\to\infty} \frac{c_0}4 a_k^{-2} \int_Q |V(\nabla h_k)|^2 \leq \liminf_{k\to\infty} a_k^{-2} \int_Q\tGF( H_k(x),\nabla h_k)\\
+ C\sum_j \mu\left(\overline{Q}\cap\left(\overline{Q(x_j,r_j)}\setminus Q(x_j,\lambda r_j)\right)\right)
\end{multline*}
Taking the limit $\lambda\to 1$ and noting \eqref{eq:muzero}, we conclude the proof.
\end{proof}

Next we present a lemma which is frequently used. This is a simple observation which can be seen as a restatement of the continuity of translations; see also \cite{KP91}.

\begin{lemma}\label{lemma:translations}
Let $v\in L^\infty(0,T; L^p(Q))$ for any $p\in[1,\infty)$. Then, up to a subsequence which is not relabelled and for almost all $t_0\in(0,T)$, it holds that
\[
\lim_{\eps\to0}\int_{Q_T} |v(t_0+\eps t/T,x) - v(t_0,x)|^p\,dxdt = 0.
\]
\end{lemma}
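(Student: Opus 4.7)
The plan is to reduce the statement to a one-sided Lebesgue-point assertion for the Bochner-measurable map $s\mapsto v(s,\cdot)\in L^{p}(Q)$. Performing the change of variables $s=t_{0}+\eps t/T$ (so $\mathrm{d}s=(\eps/T)\,\mathrm{d}t$) converts the integral under consideration into
\[
\int_{Q_{T}}|v(t_{0}+\eps t/T,x)-v(t_{0},x)|^{p}\,\mathrm{d}x\,\mathrm{d}t \;=\; \frac{T}{\eps}\int_{t_{0}}^{t_{0}+\eps}\|v(s,\cdot)-v(t_{0},\cdot)\|_{L^{p}(Q)}^{p}\,\mathrm{d}s,
\]
so the claim reduces to showing that the right-hand side tends to zero as $\eps\to 0^{+}$ for almost every $t_{0}\in(0,T)$.

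Since $L^{p}(Q)$ is separable and $v\in L^{\infty}(0,T;L^{p}(Q))$, the map $s\mapsto v(s,\cdot)$ is strongly measurable into $X:=L^{p}(Q)$, and the vector-valued Lebesgue differentiation theorem furnishes a set of full measure in $(0,T)$ at whose points
\[
\lim_{\eps\to 0^{+}}\frac{1}{\eps}\int_{t_{0}}^{t_{0}+\eps}\|v(s,\cdot)-v(t_{0},\cdot)\|_{X}\,\mathrm{d}s \;=\; 0.
\]
To pass from this $L^{1}$-in-time average to the $L^{p}$-in-time version appearing in the identity above, I would exploit the uniform bound $M:=\|v\|_{L^{\infty}(0,T;L^{p}(Q))}$ to write $\|v(s,\cdot)-v(t_{0},\cdot)\|_{X}^{p}\leq (2M)^{p-1}\|v(s,\cdot)-v(t_{0},\cdot)\|_{X}$ for a.e.\ $s$, and then combine this with the previous display.

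The only real ingredient is the Bochner-valued Lebesgue differentiation theorem, whose hypotheses are automatic here, so no substantial obstacle is present. If one prefers to avoid the abstract framework, an equivalent route is to integrate in $t_{0}$ first and apply Fubini: the problem then reduces to the classical continuity of translations in $L^{p}(Q_{T})$, which gives convergence to zero in $L^{1}(0,T)$ as a function of $t_{0}$, after which a standard diagonal extraction returns pointwise convergence along a subsequence --- this is the version corresponding to the "up to a subsequence" phrasing in the statement.
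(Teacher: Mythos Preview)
Your proposal is correct. Your primary route---the change of variables $s=t_{0}+\eps t/T$ followed by the Bochner-valued Lebesgue differentiation theorem and the pointwise bound $\|v(s,\cdot)-v(t_{0},\cdot)\|_{X}^{p}\leq (2M)^{p-1}\|v(s,\cdot)-v(t_{0},\cdot)\|_{X}$---is genuinely different from the paper's argument and in fact yields a slightly stronger conclusion: it gives the limit along the full family $\eps\to 0^{+}$ at every Lebesgue point of $s\mapsto v(s,\cdot)$, so no subsequence extraction is required. The paper instead takes precisely the alternative route you mention at the end: it integrates in $t_{0}$, swaps the order of integration via Fubini to reduce to $\int_{0}^{T}\|v(\cdot+\eps t/T,\cdot)-v\|_{L^{p}(Q_{T})}^{p}\,\mathrm{d}t$, invokes continuity of translations in $L^{p}(Q_{T})$ together with dominated convergence (using the $L^{\infty}(0,T;L^{p}(Q))$ bound), and then passes to a subsequence to recover pointwise-a.e.\ convergence in $t_{0}$. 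Your Lebesgue-point argument is cleaner and avoids the subsequence; the paper's approach is more elementary in that it only uses scalar-valued continuity of translations rather than the vector-valued differentiation theorem.
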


\begin{proof}
Consider $t_0$ as a variable and integrate in time twice to infer that
\begin{align*}
&\int_0^T \int_{Q_T} |v(t_0+\eps t/T,x) - v(t_0,x)|^p \,dxdtdt_0\\
\quad & = \int_0^T \int_{Q_T} |v(t_0+\eps t/T,x) - v(t_0,x)|^p \,dxdt_0dt \\
& = \int_0^T \|v(\cdot +\eps t/T,\cdot) - v(\cdot,\cdot)\|^p_{L^p(Q_T)}\,dt.
\end{align*}
However, by the continuity property of translations, for almost all $t$,
\[
\|v(\cdot +\eps t/T,\cdot) - v(\cdot,\cdot)\|^p_{L^p(Q_T)}\to 0,\mbox{ as }\eps\to0.
\]
Also, since $v\in L^\infty(0,T; L^p(Q))$, the above quantity is also bounded uniformly in $\eps$ so that, by dominated convergence,
\[
\lim_{\eps\to0} \int_0^T \int_{Q_T} |v(t_0+\eps t/T,x) - v(t_0,x)|^p \,dxdtdt_0 = 0.
\]
In particular, up to a subsequence (not relabelled), for almost all $t_0$
\[
\lim_{\eps\to0} \int_{Q_T} |v(t_0+\eps t/T,x) - v(t_0,x)|^p = 0.
\]
\end{proof}

Let us note that, to prove Theorem \ref{theorem:2}, we are required to localise our measure-valued solution in time, i.e. consider the measures $(\nu_{t_0,x})_{x\in Q}$.
As it is perhaps evident, particularly after Lemma \ref{lemma:translations}, the generating sequences for these measures will be given by a time scaling of the generating sequence
for $\nu=(\nu_{t,x})_{(t,x)\in Q_T}$. However, the lack of equiintegrability of the assumed generating sequence presents an obstacle and, here, we present a final lemma which assures that an equiintegrable generating sequence of spatial gradients $(\nabla z^k)$ can be chosen which has the additional property that $(z^k)$ converges strongly to $y(t_0,x)$ in $L^p(Q_T)$. This can be seen as a time-dependent generalisation of the celebrated decomposition theorem of Kristensen \cite{kristensen99}. At this stage, we remark that if instead of measure-valued solutions weak solutions are to be considered, no decomposition is required and the proof of Theorem \ref{theorem:2} simplifies significantly. 

\begin{lemma}\label{lemma:decomposition}
Let $\nu^F = (\nu^F_{t,x})_{(t,x)\in Q_T}$ be a family of probability measures as in Theorem \ref{theorem:2}.
%generated by a sequence of spatial gradients $(\nabla y^k)$ such that
%\begin{align*}
%&(y^k)\mbox{ is bounded in } L^\infty(0,T; W^{1,p}(Q))\cap L^{\infty}(0,T; H^{1}_{0}(Q)) \\
%&(\partial_t \nabla y^k)\mbox{ is bounded in } L^\infty(0,T;H^{-1}(Q)).
%\end{align*}
Then, for almost all $t_0\in(0,T)$, there exists a sequence of spatial gradients $(\nabla z^k)$ also bounded in $L^\infty(0,T;L^p(Q))$, in particular $z_k\in L^\infty(0,T; W^{1,p}(Q))\cap L^{\infty}(0,T; H^{1}_{0}(Q))$, with the following properties:
\begin{enumerate}
\item[(1)] $(\nabla z^k)$ generates the measure $(\nu^F_{t_0,x})_{x\in Q}$ as a $p$-Young measure;
\item[(2)] $(|\nabla z^k|^p)$ is weakly relatively compact in $L^1(Q_T)$;
\item[(3)] $z^k \rightarrow y(t_0,\cdot)$ strongly in $L^p(Q_T)$.
\end{enumerate}
\end{lemma}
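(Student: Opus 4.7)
My plan is to construct $z^k$ as a time-independent approximation, namely $z^k(t,x) := v^k(x)$, where $(v^k) \subset W^{1,p}(Q)$ is produced by Kristensen's decomposition theorem applied to the spatial-slice Young measure $(\nu^F_{t_0,x})_{x\in Q}$. With this ansatz, properties (1) and (2) reduce essentially to $Q$-side statements, and property (3) follows from Rellich compactness.

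The preliminary step is to make sense of the trace $y(t_0,\cdot) \in W^{1,p}(Q)$. Since $y^k$ has zero spatial average on the torus, the Fourier-side identity $\|f\|_{L^2(Q)} = \|\nabla f\|_{H^{-1}(Q)}$ (valid for zero-average periodic $f$) upgrades the hypothesis $\partial_t \nabla y^k \in L^\infty(0,T; H^{-1}(Q))$ to $\partial_t y^k \in L^\infty(0,T; L^2(Q))$. Combined with the $L^\infty(0,T; W^{1,p}(Q))$ bound on $y^k$ and the compact embedding $W^{1,p}(Q) \hookrightarrow\hookrightarrow L^p(Q)$, the Aubin--Lions lemma yields a subsequence with $y^k \to y$ strongly in $L^p(Q_T)$; hence for almost every $t_0$, $y(t_0,\cdot) \in W^{1,p}(Q)$ has zero average and, along a further diagonal subsequence, $y^k(t_0,\cdot) \to y(t_0,\cdot)$ in $L^p(Q)$.

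The heart of the proof is to verify that $(\nu^F_{t_0,x})_{x\in Q}$ is itself a gradient $p$-Young measure on $Q$ for almost every $t_0$, via the Kinderlehrer--Pedregal characterization: (a) $\int_Q \langle\nu^F_{t_0,x}, |\cdot|^p\rangle\, dx < \infty$, which follows from Lemma \ref{lemma:supboundint}; (b) the barycentre equals $\nabla u$ for some $u\in W^{1,p}(Q)$, which holds with $u = y(t_0,\cdot)$ by Fubini applied to the identity $\nabla y(t,x) = \langle\nu^F_{t,x},\mathrm{id}\rangle$ valid a.e.\ on $Q_T$; and (c) the Jensen-type inequality $W(\langle\nu^F_{t_0,x},\mathrm{id}\rangle) \le \langle\nu^F_{t_0,x},W\rangle$ for a.e.\ $x$ and every continuous quasiconvex $W$ with $p$-growth. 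Condition (c) is inherited from the same statement on $Q_T$, which is true because $\nu^F$ is a gradient $p$-Young measure there; to pass to slices I select a countable family of quasiconvex $W$'s dense in the relevant $p$-growth class, apply Fubini to each, and intersect the resulting full-measure sets of good $t_0$'s.

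Once the slice is identified as a gradient $p$-Young measure with barycentre $\nabla y(t_0,\cdot)$, Kristensen's decomposition produces $v^k \in W^{1,p}(Q)$ with $\nabla v^k$ generating $\nu^F_{t_0,\cdot}$, with $(|\nabla v^k|^p)$ equiintegrable on $Q$, and with $v^k \rightharpoonup y(t_0,\cdot)$ weakly in $W^{1,p}(Q)$ (subtracting the spatial mean if necessary enforces $\int_Q v^k = 0$ without affecting the gradient, hence not the Young measure). Setting $z^k(t,x) := v^k(x)$, property (1) is built-in; property (2) follows from the $t$-independence of $|\nabla z^k(t,x)|^p = |\nabla v^k(x)|^p$ together with a standard splitting on level sets of $|E_t|$ converting $Q$-equiintegrability into $Q_T$-equiintegrability; property (3) follows from Rellich, which upgrades weak $W^{1,p}(Q)$ convergence of $v^k$ to strong $L^p(Q)$ convergence, so that $\|z^k - y(t_0,\cdot)\|_{L^p(Q_T)} = T^{1/p}\|v^k - y(t_0,\cdot)\|_{L^p(Q)} \to 0$. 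I expect the main difficulty to lie in Kinderlehrer--Pedregal's condition (c): transferring the defining Jensen inequality from $Q_T$ to almost every time slice proceeds through an a priori uncountable family of quasiconvex test functions, so the Fubini argument relies delicately on picking a countable separating subfamily in the $p$-growth seminorm.
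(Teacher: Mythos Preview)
Your approach is genuinely different from the paper's and, if it worked, would be cleaner. The paper never tries to show that the time-slice $(\nu^F_{t_0,x})_{x\in Q}$ is a gradient $p$-Young measure on $Q$; instead it builds a \emph{time-dependent} sequence on $Q_T$ by first time-rescaling $y^{k,\eps}(t,x)=y^k(t_0+\eps t/T,x)$, then truncating, and finally applying the Hodge projection $\mathcal P^{\rm curl}$ pointwise in $t$ to restore the gradient structure. Properties (1)--(3) are checked by hand using the weak/strong $(r,r)$ bounds for $\mathcal P^{\rm curl}$, $\mathcal P^{\rm div}$ and elliptic estimates.

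The gap in your argument is the verification of Kinderlehrer--Pedregal condition~(c). You write that ``$\nu^F$ is a gradient $p$-Young measure on $Q_T$'', but it is not: the generating sequence consists of \emph{spatial} gradients $\nabla_x y^k:Q_T\to\Rdd$, not full space-time gradients $\nabla_{(t,x)} y^k:Q_T\to\mathbb{R}^{d\times(d+1)}$. The standard KP theory on $Q_T$ would therefore only yield a Jensen inequality for $(d{+}1)$-dimensional quasiconvex integrands, which is not what you need. One can try to repair this by passing to the full gradient $\nabla_{(t,x)}y^k$ and observing that $\tilde W(\xi_0,\xi'):=W(\xi')$ is $(d{+}1)$-quasiconvex whenever $W$ is $d$-quasiconvex; but $\partial_t y^k$ is only bounded in $L^2$, so $\nabla_{(t,x)}y^k$ generates merely a gradient $2$-Young measure, and KP then gives the Jensen inequality only for test functions with quadratic growth. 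For $p>2$ this does not suffice to invoke the $p$-characterisation on $Q$. The alternative route through the Fonseca--M\"uller $\mathcal A$-free framework with $\mathcal A=\curl_x$ also fails as stated, since that operator on $Q_T$ violates the constant-rank hypothesis (its symbol vanishes at purely temporal frequencies). In short, inheriting condition~(c) from $Q_T$ to the slice is exactly the nontrivial point, and your Fubini/separability step presupposes a Jensen inequality on $Q_T$ that has not been established. The paper's more hands-on construction is designed precisely to avoid this issue.
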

%\begin{Remark}
%We point out that the assumption $(\partial_t \nabla y^k)$ being bounded in $L^{\infty}(0,T;H^{-1}(Q))$ is used in order to infer the strong convergence of $(y^k)$ in $L^{p}((0,T)\times Q)$, which is crucial to obtain part (3). Lemma \ref{lemma:decomposition} can equivalently be stated under the assumption that $y^k\to y$ in $L^{p}((0,T)\times Q)$.
%\end{Remark}
\begin{proof}
For $t_0\in(0,T)$ define
\[
y^{k,\eps}(t,x) := y^k(t_0+\eps t/T,x).
\]
We claim that for a.e. $t_0$ an appropriate subsequence of $(\eps_k)$ can be chosen such that $(\nabla y^{k,\eps_k})$ generates the measure $(\nu^F_{t_0,x})_{x\in Q}$ and that $y^{k,\eps_k}\rightarrow y(t_0,\cdot)$ in $L^p(Q_T)$. To this end, note that, up to a subsequence which is not relabelled, for any $g\in C_p(\mathbb{R}^{d\times d})$ and any Borel set $E\subset Q_T$ for a.e. $t_0\in (0,T)$ it holds that
\begin{equation}\label{eq:step2_1}
\lim_{\eps\to 0}\int_E |\langle\nu^F_{t_0+\eps t/T,x},g\rangle - \langle\nu^F_{t_0,x},g\rangle| = 0.
\end{equation} 
This is a consequence of Lemma \ref{lemma:translations} noting that the function $v(t,x) = \langle\nu^F_{t,x},g\rangle$ is an element of $L^\infty(0,T; L^1(Q))$ since, by Lemma \ref{lemma:supboundint},
\[
\sup_t \int_Q \langle\nu^F_{t,x},|\cdot |^p\rangle < \infty.
\]
Hence, it follows that for any such $g$ and $E$, denoting by $\chi_E$ the characteristic function of $E$ and $t_0$ fixed a.e. in $(0,T)$ using \eqref{eq:step2_1}, we infer that
\begin{align}
\lim_{\eps\to0}\lim_{k\to\infty}\int_{E} g(\nabla y^{k,\eps}(t,x)) & = \lim_{\eps\to0} \lim_{k\to\infty} \frac{1}{\eps}\int_{t_0}^{t_0+\eps}\int_Q\chi_E((t-t_0)T/\eps,x)g(\nabla y^k(t,x))\notag\\
& = \lim_{\eps\to0}\int_{Q_T}\chi_E(t,x)\langle\nu^F_{t_0+\eps t/T,x},g\rangle\notag\\
& = \int_E \langle\nu^F_{t_0,x},g\rangle.\label{eq:step2_11}
\end{align} 
%Note also that $g = {\rm id}\in C_p$.

%Also, note that for each $t$, $(y^k(t,\cdot))$ is bounded in $W^{1,p}$ and must therefore converge weakly to a limit which, due to the strong convergence in $CL^2$ for $y^k$ and $CL^2_w$ for $\nabla y^k$, can be no other than $y(t,\cdot)$. 
In addition, we find that
\begin{align}\label{eq:I+II}
&\int_{Q_T}|y^k(t_0+\eps t/T,x) - y(t_0,x)|^p \notag \\
& \leq C\int_{Q_T}|y^k(t_0+\eps t/T,x) - y(t_0+\eps t/T,x)|^p\notag\\
&+ C\int_{Q_T}|y(t_0+\eps t/T,x) - y(t_0,x)|^p =: I + II,
\end{align}
where $\eps$ denotes the (non-relabelled) subsequence chosen in \eqref{eq:step2_1}. Noting that $y\in L^\infty(0,T; L^p(Q))$, Lemma \ref{lemma:translations} says that, up to extracting a further subsequence, for a.e. $t_0\in (0,T)$
\[
\lim_{\eps\to0} II = 0.
\]
Regarding term $I$, note that
\[
(y^k) \subset L^\infty(0,T; W^{1,p}(Q))\mbox{ and } (\partial_t y^k )\subset L^2(0,T; L^2(Q))
\]
are both bounded in the respective spaces. Then, since $W^{1,p}(Q) \subset\subset L^p(Q) \subset L^2(Q)$, the Aubin--Lions lemma says that
\[
y^k \rightarrow y \mbox{ in } C(0,T; L^p(Q)),
\]
i.e.
\begin{align*}
\lim_{k\to\infty} I & = \lim_{k\to\infty} \frac{1}{\eps}\int_{t_0}^{t_0+\eps} \int_Q |y^k(t,x) - y(t,x)|^p\,dx dt \\
& \leq \lim_{k\to\infty}\sup_t \int_Q |y^k(t,x) - y(t,x)|^p\,dx = 0.
\end{align*}
%and for fixed $\eps$ and $t$ a.e., note that
%\[
%\int_Q |y^k(t_0+\eps t/T,x) - y(t_0+\eps t/T,x)|^p \to 0
%\]
%since we have established that $y^k(t,\cdot)\rightarrow y(t,\cdot)$ in $L^p_x$ for a.e. $t$. Also, $y^k$ as well as $y$ belong to $L^\infty_t L^p_x$ so that the above sequence of integrals is uniformly bounded in $\eps$ (as functions of $t$). Then dominated convergence ensures that 
%\[
%\lim_{k\to\infty} I = 0.
%\]
Returning to \eqref{eq:I+II}, we infer that for a.e $t_0\in (0,T)$
\begin{equation}\label{eq:step2_13}
\lim_{\eps\to0}\lim_{k\to\infty} \int_{Q_T}|y^k(t_0+\eps t/T,x) - y(t_0,x)|^p = 0.
\end{equation}
Now, for $g$ and $E$ in a countable dense subset of $C_p(\Rdd)$ and of the collection of Borel subsets of $Q_T$, respectively, we may choose a subsequence $(\eps_k)$ such that \eqref{eq:step2_11} and \eqref{eq:step2_13} hold. In particular, for $t_0$ fixed almost everywhere in $(0,T)$,
\[
\lim_{k\to\infty}\int_{E} g(\nabla y^{k,\eps_k}(t,x)) = \int_E \langle\nu^F_{t_0,x},g\rangle
\]
for all elements of the countable subsets where $g$ and $E$ belong and, by density, for all $g\in C_p(\Rdd)$ and all $E\subset Q_T$, i.e.
\[
g(\nabla y^{k,\eps_k}) \rightharpoonup \langle\nu^F_{t_0,x},g\rangle \mbox{ in }L^1(Q_T)
\]
and $(\nabla y^{k,\eps_k})$ generates the measure $(\nu_{t_0,x})_x$. Note also that
\[
(\nabla y^{k,\eps_k})\subset L^\infty(0,T;L^p(Q)).
\]

Next, we perform a suitable decomposition of $(\nabla y^{k,\eps_k})$ to infer the existence of the required sequence $(\nabla z^k)$. 
For $n\in \mathbb{N}$ consider the truncation operator
\[
T_n(\xi) = \left\{\begin{array}{ll}\xi,& |\xi| \leq n\\
 n\xi/|\xi|,& |\xi|>n. 
\end{array}\right.
\]
We infer that
\begin{equation}\label{eq:step2_2}
\lim_{n\to\infty}\lim_{k\to\infty}\int_{Q_T}|T_n(\nabla y^{k,\eps_k})|^p = \lim_{n\to\infty}\int_{Q_T}\langle\nu_{t_0,x}, |T_n(\cdot)|^p\rangle = \int_{Q_T}\langle\nu_{t_0,x}, |\cdot|^p\rangle,
\end{equation}
where the second equality follows from monotone convergence.%\marginpar{this is also true, right? Also by dominated convergence}
%dominated convergence and the fact that $\langle\nu_{t_0,x}, |\cdot|^p\rangle\in L^1$. 

Moreover, note that
\begin{equation}\label{eq:step2_3}
\lim_{n\to\infty}\lim_{k\to\infty}\int_{Q_T}|T_n(\nabla y^{k,\eps_k}) - \nabla y^{k,\eps_k}|  \leq \lim_{n\to\infty}\sup_{k}\int_{\{|\nabla y^{k,\eps_k}|>n\}} 2|\nabla y^{k,\eps_k}| \to 0,
\end{equation}
due to the equiintegrability of ($\nabla y^{k,\eps_k}$). Then, there exists a subsequence $(k_n)$, such that the functions
\[
V_n(t,x):=T_n(\nabla y^{k_n,\eps_{k_n}}(t,x))
\]
simultaneously satisfy
\begin{align}\label{eq:step2_3a}
&\lim_{n\to\infty}\int_{Q_T}|V_n - \nabla y^{k_n,\eps_{k_n}}|  = 0,\\
\label{eq:step2_2a}
&\lim_{n\to\infty}\int_{Q_T}|V_n|^p =  \int_{Q_T}\langle\nu_{t_0,x}, |\cdot|^p\rangle.
\end{align}
In particular, due to the bounds on $(y^{k,\eps_k})$, $(V_n)$ is bounded in $L^\infty(0,T;L^p(Q))$. Also, by \eqref{eq:step2_3a}, $(V_n)$ generates the measure $(\nu_{t_0,x})_x$ and, by  \eqref{eq:step2_2a}, $(|V_n|^p)$ must be weakly relatively compact in $L^1(Q_T)$.

Next, for almost all $t$, consider the Hodge decomposition of $V_n(t,\cdot) \in L^p(Q)$, that is
\begin{equation}\label{eq:hel}
V_n(t,\cdot) = \mathcal P^{\rm curl}(V_n(t,\cdot)) + \mathcal P^{\rm div}(V_n(t,\cdot)) = : \nabla z^n(t,\cdot) + g^n(t,\cdot) 
\end{equation}
where $\mathcal P^{\rm curl}$, $\mathcal P^{\rm div}$ denote respectively the projections onto the space of curl-free and divergence-free vector fields in $L^p(Q)$. In particular, we may also assume that $z^n(t,\cdot)\in W^{1,p}(Q)$ has zero average. We claim that $(\nabla z^n)$ is the required sequence. 

For convenience, let us write $y^n = y^{k_n,\eps_{k_n}}$ and recall that $\mathcal P^{\rm curl}$ is a strong $(r,r)$ operator for any $1<r<\infty$ (see e.g. \cite{kristensen99}), i.e. for a.e. $t\in(0,T)$,
\begin{align*}
\|\nabla z^n(t,\cdot)\|_{L^p(Q)} & = \|\mathcal P^{\rm curl}(V_n(t,\cdot))\|_{L^p(Q)} \\
& \leq C\|\nabla y^n(t,\cdot)\|_{L^p(Q)} \\
& \leq C \sup_t \|\nabla y^n(t,\cdot)\|_{L^p(Q)}.
\end{align*}
This shows that $(\nabla z^n)$ is bounded in $L^\infty(0,T;L^p(Q))$.
To see that it generates $(\nu_{t_0,x})_x$, note that
\begin{align*}
\nabla y^n(t,\cdot) - \nabla z^n(t,\cdot) & = \nabla y^n(t,\cdot) - V_n(t,\cdot) + g^n(t,\cdot)\\
&= \nabla y^n(t,\cdot) - V_n(t,\cdot) + \mathcal P^{\rm div}(V_n(t,\cdot)) \\
& = \nabla y^n(t,\cdot) - V_n(t,\cdot) + \mathcal P^{\rm div}(V_n(t,\cdot) - \nabla y^n(t,\cdot)).
\end{align*}
However, $\mathcal P^{\rm div}$ is a weak $(1,1)$ operator (see e.g. \cite{kristensen99}), i.e. for any fixed $\eps$ and almost all $t$
\begin{equation*}
\mathcal L^d(\{|\nabla y^n(t,\cdot) - \nabla z^n(t,\cdot)|>\eps\}) \leq \frac{C}{\eps}\|\nabla y^n(t,\cdot) - V_n(t,\cdot)\|_{L^1(Q)}.
\end{equation*}
Then, by \eqref{eq:step2_3a}, it also holds that
\begin{align}
\mathcal L^{d+1}(\{|\nabla y^n - \nabla z^n| >\eps\}) & = \int_0^T \mathcal L^d(\{|\nabla y^n(t,\cdot) - \nabla z^n(t,\cdot)|>\eps\}) \nonumber \\
\label{eq:step2_4}
& \leq \frac{C}{\eps}\|\nabla y^n - V_n\|_{L^1(Q_T)} \rightarrow 0.
\end{align}
This proves that $(\nabla z^n)$ generates the measure $(\nu_{t_0,x})_x$. To prove that $(|\nabla z^n|^p)$ is equiintegrable in $Q_T$, fix $\eps>0$ arbitrary and some $q>p$. Since $(|V_n|^p)$ is equiintegrable, there exists some $(W_n)$ with the property that $\|V_n - W_n\|_{L^p(Q_T)}\leq \eps$ and $\sup_n \|W_n\|_{L^q(Q_T)} <\infty$. This is an equivalent characterisation of equiintegrability which follows immediately from the definition, see also \cite[Lemma 3.2]{kristensen99}. Then, the fact that $\mathcal P^{\rm curl}$ is a strong $(r,r)$ operator for all $1<r<\infty$, implies that
\[
\|\nabla z^n - \mathcal P^{\rm curl}(W_n)\|_{L^p(Q_T)} = \|\mathcal P^{\rm curl}(\nabla z^n -  W_n)\|_{L^p(Q_T)} \leq c \|V_n - W_n\|_{L^p(Q_T)}\leq c\eps
\]
and
\[
\sup_n\|\mathcal P^{\rm curl}(W_n)\|_{L^q(Q_T)}\leq c \sup_n \|W_n\|_{L^q(Q_T)} <\infty.
\]
This proves that $(|\nabla z^n|^p)$ is weakly relatively compact in $L^1(Q_T)$. To conclude the proof, we need to establish that $z^n$ converges strongly to $y(t_0,\cdot)$ in $L^p(Q_T)$. This is possible by exploiting the fact that $\nabla y^n$ and $V_n$ share the same oscillations and do not concentrate in $L^{q}$ with $q<p$. Indeed, we note that by \eqref{eq:step2_3a} and the bound in $L^{\infty}(0,T;L^{p}(Q))$ of $V_n$ and $\nabla y^n$ it follows that 
\begin{equation}\label{eq:s21}
\|V_n-\nabla y^n\|_{L^{r}(L^{q})}\rightarrow 0\textrm{ as }n\to\infty\textrm{ for any }r<\infty, q<p. 
\end{equation}
By adding $\nabla y^n$ to both members of \eqref{eq:hel} and taking the divergence one gets that 
\begin{equation}\label{eq:ell}
-\Delta(z^n-y^n)=\dv(\nabla y^n-V_n).
\end{equation}
Note $(z^n-y^n)$ has zero average because both $z^n$ and $y^n$ have zero average. Then, by standard elliptic estimates, we have that for any $1<q<\infty$, 
\begin{equation}\label{eq:ell1}
\|\nabla(z^n(t,\cdot)-y^n(t,\cdot))\|_{L^q(Q)}\leq C\|\nabla y^n(t,\cdot)-V_n(t,\cdot)\|_{L^q(Q)}.
\end{equation}
Let us first treat the case $d=2$ and $p=2$. In this case, let us consider $q\in (1,2)$. By Sobolev embedding and \eqref{eq:ell1} we infer that 
\begin{equation*}
\begin{aligned}
\|z^n(t,\cdot)-y^n(t,\cdot)\|_{L^2(Q)}&\leq C\|\nabla(z^n(t,\cdot)-y^n(t,\cdot))\|_{L^1(Q)}\\
                                               &\leq C\|\nabla(z^n(t,\cdot)-y^n(t,\cdot))\|_{L^q(Q)}\\
                                               &\leq C\|\nabla y^n(t,\cdot)-V_n(t,\cdot)\|_{L^q(Q)}.
\end{aligned}
\end{equation*}
Then, by integrating in time we have 
\begin{equation}\label{eq:s22}
\int_{0}^{T}\|z^n(t,\cdot)-y^n(t,\cdot)\|^{2}_{L^2(Q)}\,dt\leq C\int_{0}^{T}\| y^n(t,\cdot)-V_n(t,\cdot)\|^{2}_{L^q(Q)}\,dt\rightarrow 0,
\end{equation}
where the right-hand side goes to $0$ as $n\to\infty$ because of \eqref{eq:s21}. Since $y^{n}\rightarrow y(t_0,\cdot)$ in $L^{2}(Q_T)$, from \eqref{eq:s22}, we also have that
$z^{n}\rightarrow y(t_0,\cdot)$ in $L^{2}(Q_T)$.

On the other hand, for $d=3$, consider $q=3p/(p+3)$. Note that $q\in (1,p)$. Then, by the Sobolev embedding 
and \eqref{eq:ell1} we have that
\begin{equation*}
\begin{aligned}
\|z^n(t,\cdot)-y^n(t,\cdot)\|_{L^p(Q)}&\leq C\|\nabla(z^n(t,\cdot)-y^n(t,\cdot))\|_{L^q(Q)}\\
                                               &\leq C\| \nabla y^n(t,\cdot)-V_n(t,\cdot)\|_{L^q(Q)}
\end{aligned}
\end{equation*}
and, by integrating in time, 
 \begin{equation}\label{eq:s23}
\int_{0}^{T}\|z^n(t,\cdot)-y^n(t,\cdot)\|^{p}_{L^p(Q)}\,dt\leq C\int_{0}^{T}\| y^n(t,\cdot)-V_n(t,\cdot)\|^{p}_{L^q(Q)}\,dt\rightarrow 0.
\end{equation}                                              
Again the right-hand side goes to $0$ as $n\to\infty$ because of \eqref{eq:s21} and since $y^{n}\rightarrow y(t_0,\cdot)$ in $L^{p}(Q_T)$, from \eqref{eq:s23}, we also have that
$z^{n}\rightarrow y(t_0,\cdot)$ in $L^{p}(Q_T)$.
%We first show that $(\partial_t z^n)$ is bounded in $L^2(Q_T)$. Let $\psi \in H_{0}^1(Q)$ and denote by $\langle H,\psi\rangle$ the duality pairing between $H\in H^{-1}(Q)$ and $\psi$. Noting that $(\partial_t \nabla y^k)$ is bounded in $L^2(0,T; H^{-1}(Q))$, we infer that
%\begin{align*}
%\int_0^T |\langle\partial_t \nabla y^n,\psi\rangle|^2 & = \eps_{k_n}\int_{t_0}^{t_0+\eps_{k_n}} |\langle\partial_t \nabla y^k(t,\cdot),\psi\rangle|^2\\
%&\leq  \eps_{k_n}\int_{0}^{T} |\langle\partial_t \nabla y^k(t,\cdot),\psi\rangle|^2,
%\end{align*}
%i.e. $(\partial_t \nabla y^n)$ is bounded in $L^2(0,T; H^{-1}(Q))$. Similarly, $(\partial_t V_n)$ is bounded in  $L^2(0,T; H^{-1}(Q))$ because $V_n$ is a truncation of $\nabla y^n$. As for $\partial_t z^n$, note that
%\begin{equation*}
%\begin{aligned}
%-\Delta\partial_t z_n = -\dv \partial_t V_n,
%\end{aligned}
%\end{equation*}
%with periodic boundary conditions. Moreover, since $z^n$ has been chosen with zero average, the same holds for $\partial_t z^n$ and, being a divergence, also has zero average. This in particular implies that $(\partial_t z^n)$ is bounded in $L^2(Q_T)$. Then, together with the fact that  $(z^n)$ is bounded in $L^{p}(0,T;W^{1,p}(Q))$, 
%%$W^{1,p}$ is compactly embedded in $L^{p}$ and $L^{p}$ is continuously embedded in $L^{2}$ since we are on a bounded domain, by using 
%the Aubin-Lions lemma allows us to infer that  
%\[
%z^n\rightarrow y(t_0,\cdot)\mbox{ in }L^p(Q_T).
%\]
\end{proof}

Lastly, in the proof Theorem \ref{theorem:2}, we require another decomposition lemma which we state here. Its proof can be found in \cite{JCCVMO}.

\begin{proposition}\label{prop:3}
Let $\psi_k\rightharpoonup \psi$ in $H^1_0(Q)$. Suppose that $(\eta_k)\subset (0,1]$ and $(\eta_k\psi_k)$ is bounded in $W^{1,p}(Q)$. Then, there exist $g_k \in C^\infty_c(Q)$ and $b_k\in H^1(Q)$ such that
\begin{enumerate}
\item[(a)] $\psi_k = \psi + g_k + b_k$;
\item[(b)] $g_k,\,b_k\rightharpoonup 0$ in $W^{1,2}(Q)$ and $\eta_k g_k,\, \eta_k b_k \rightharpoonup 0$ in $W^{1,p}(Q)$;
\item[(c)] $\nabla b_k \rightarrow 0$ in measure;
\item[(d)] $\left(|\nabla g_k|^2\right)$ and $\left(|\eta_k \nabla g_k|^p\right)$ are equiintegrable.
\end{enumerate}
\end{proposition}

We may now proceed to the proof of Theorem \ref{theorem:2}.
%We remark that Step 1 is crucial and it is an adaptation of the arguments of Zhang \cite{Zhang92} to show that smooth extremals of quasiconvex functionals are minimisers with respect to localised variations.

\begin{proof}[Proof of Theorem \ref{theorem:2}] \quad
\vspace{0.2cm}

We show that there exist constants $C_0=C_0(W,K)$ and $C_1=(W,K)$ such that  for all $H\in\mathcal{F}_K$ and all $\varphi\in W^{1,p}(Q)\cap H^1_0(Q)$, 
\begin{equation}\label{eq:step1}
\int_Q |V(\nphi)|^2 dx
\leq C_1 \int_Q \GF(H(x),\nphi) dx +  C_0 \int_Q |V(\varphi)|^2 dx.
\end{equation}
Then, choose $H = \bar{F}(t_0,\cdot) = \nabla\bar{y}(t_0,\cdot)\in\mathcal{F}_K$ for some $K>0$ uniform in $t_0$, and $\varphi = z^k(t,\cdot) - \bar{y}(t_0,\cdot)$ where $z_k$
%$z_k\in L^\infty(0,T; W^{1,p}(Q)\cap H^{1}_{0}(Q))$
%\[
%z_k\in L^\infty(0,T; W^{1,p}(Q))\cap L^{\infty}(0,T; H^{1}_{0}(Q))
%\]
is constructed in Lemma \ref{lemma:decomposition}. Next, integrate in time and take the limit $k\to\infty$, using the equiintegrability of $(|\nabla z^k|^p)$ and that $(\nabla z^k)$ generates $(\nu^F_{t_0,x})_{x\in Q}$, to conclude the proof of Theorem 5.1, i.e. that
\begin{multline*}
\int_Q\langle\nu^F_{t_0,x},|V(\xi-\bar F(t_0,x))|^2\rangle dx \\
\leq C_1 \int_Q  \langle\nu_{t_0,x}, G_W(\bar{F}(t_0,x),\xi - \bar{F}(t_0,x))\rangle \, dx +  C_0 \int_Q |V(y(t_0,x)-\bar y(t_0,x))|^2dx.
\end{multline*}

In order to show \eqref{eq:step1}  it suffices to prove the existence of some $\eps_0>0$ such that for all $H\in\mathcal{F}_K$ and all $\varphi\in W^{1,p}(Q)\cap H^1_0(Q)$ with $\|\varphi\|_{L^p(Q)}<\eps_0$ it holds that
\begin{equation}\label{eq:tGF>0}
\int_Q\tGF(H(x),\nphi) + \frac{c_1}{2}|\varphi|^2 \geq 0.
\end{equation}
Indeed, by the definition of $\tW$, the strong convexity of $f(\xi) = |V(\xi)|^2$ and Lemma \ref{lemma:technical_general} (d), \eqref{eq:tGF>0} says that whenever $\|\varphi\|_{L^p(Q)}<\eps_0$,
\[
C(K)\int_Q |V(\nphi)|^2 \leq C(K) \int_Q G_f(H,\nphi) \leq \int_Q\GF(H,\nphi) + \frac{c_1}{2}|V(\varphi)|^2.
\]
Then, we can conclude \eqref{eq:step1} as for $\|\varphi\|_{L^p}\geq \eps_0$, by the coercivity of $W$ and Young's inequality, it holds that
\begin{align}
\int_Q\GF(H(x),\nabla\varphi) &\geq \int_Q c|H+\nabla\varphi|^p - C(W,K) - C(\delta)|DW(H)|^q - \delta |\nabla\varphi|^p\nonumber\\
& \geq  - C(W,K) + \tilde{c} \int_Q |\nabla\varphi|^p \geq  - \frac{C(W,K)}{\eps^p_0}\int_Q|\varphi|^p + \tilde{c} \int_Q |\nabla\varphi|^p,
\label{eq:1.1}
\end{align}
for $\delta$ small enough. This concludes the proof after noting that $\|V(\nabla\varphi)\|^2_{L^2} \leq 1 + 2\|\nabla\varphi\|^p_{L^p}$ and that, by Poincar\'e's inequality, $\eps^p_0 \leq  C \|\nabla\varphi\|^p_{L^p}$.

Hence, we are left to prove \eqref{eq:tGF>0} where we proceed by contradiction. Suppose \eqref{eq:tGF>0} is false. Then we can find $(H_k)\subset \mathcal{F}_K$, $H\in\mathcal{F}_K$, and $(\varphi_k)\subset W^{1,p}(Q)\cap H^1_0(Q)$ such that $\|\varphi_k\|_{L^p(Q)} \rightarrow 0$, $H_k \rightarrow H$ in $C^0({Q})$ 
%$\nabla H_k \overset{\ast}{\rightharpoonup}\nabla H$ in $W^{1,\infty}(Q)$ 
and
\begin{equation}\label{eq:6}
\int_Q \tGF(H_k(x),\nphi_k(x)) + \frac{c_1}2|\varphi_k(x)|^2 <0.
\end{equation}

\underline{Step 1}: We show that $\varphi_k \rightarrow 0$ in $W^{1,p}(Q)$ and that
\begin{equation}\label{eq:Lambdabounded}
\sup_k \frac{\beta_k^p}{\alpha_k^2} =:\Lambda <\infty,\mbox{ where }\alpha_k = \|\nphi_k\|_{L^2(Q)},\,\,\beta_k = \|\nphi_k\|_{L^p(Q)}.
\end{equation}

By \eqref{eq:6}, after using the $p$-coercivity of $\tW$ and Young's inequality, we find that 
%.Indeed, by ,  is $p$-coercive and using  we may estimate
%\begin{align*}
%\tGF(H_k,\nphi_k) &\geq - C(\tW,K) + \tilde{c}  |\nabla\varphi_k|^p,
%\end{align*}
%i.e. 
$(\nabla\varphi_k)$ is bounded in $W^{1,p}(Q)$.
%states that
%\begin{equation}\label{eq:16}
%\int_Q \tGF(H_k,\nphi_k) < 0
%\end{equation}
%and thus $(\nabla\varphi_k)$ is bounded in $W^{1,p}(Q)$.
We may thus apply Proposition \ref{prop:2} with $a_k=1$ and $h_k = \varphi_k$ to find that, by \eqref{eq:6},
%\[
%a_k^{-1}V(h_k) = V(\varphi_k) \rightarrow 0\mbox{ in }L^2(Q)
%\]
%and $(a_k^{-1}V(\nabla h_k)) = (V(\nabla\varphi_k))$ is bounded in $L^2(Q)$, it holds that
\begin{align*}
\liminf_{k\to\infty} \frac{c_0}4\int_Q|V(\nphi_k)|^2 & \leq \liminf_{k\to\infty}\int_Q\tGF(H_k,\nphi_k) \leq 0,
\end{align*}
and $\varphi_k \rightarrow 0$ in $W^{1,p}(Q)$. 
%In particular, up to a subsequence, $\|V(\nabla\varphi_k)\|_{L^2(Q)} \to 0$ and hence $\alpha_k$, $\beta_k\to 0$.
Regarding \eqref{eq:Lambdabounded}, Lemma \ref{lemma:technical_general} (c) and the $p$-coercivity of $\tW$ implies that 
%and the coercivity of $\tW$ imply that
\begin{equation}\label{eq:new_coercivity}
\int_Q\tGF(H(x),\nphi_k) \geq d\int_Q |\nphi_k|^p - c\int_Q |\nphi_k|^2,
%\quad\forall\, \xi\in\Rdd.
\end{equation}
where $d=d(\tW,K)$, $c=c(\tW,K)$. Then \eqref{eq:Lambdabounded} follows after dividing by $\alpha_k^2$ and noting \eqref{eq:6}.

%\\\quad\\
%, we conclude the result
%\[
%d \frac{\beta_k^p}{\alpha_k^2} - c \leq \alpha_k^{-2} \int_Q  \tGFk(x,\nphi_k) < 0,
%\]
%by \eqref{eq:6}. Hence, $\Lambda = \sup_k \beta_k^p/\alpha_k^2 < \infty$, concluding Step 1.\\\quad\\
\underline{Step 2}: \textcolor{black}{Since $\varphi_k\to 0$ in $W^{1,p}(Q)$ we cannot contradict \eqref{eq:6}. Instead, let
\[
\psi_k := \alpha_k^{-1} \varphi_k
\]
and decompose into purely oscillating and concentrating parts using Proposition \ref{prop:3}. Indeed, note that since} $\|\nabla\psi_k\|_{L^2(Q)} = 1$ and $\psi_k\in H^1_0(Q)$, we find that 
%by the Poincar\'e inequality, $\|\psi\|_{W^{1,2}(Q)}$ is bounded since {$\varphi_k,\,\psi_k\in H^1_0(Q)$.}
$\psi_k\rightharpoonup \psi$ in $W^{1,2}(Q)$. Moreover, 
%as $|Q|=1$, 
setting $\eta_k = \frac{\alpha_k}{\beta_k}\in(0,1]$,
%note that
%\[
%\int_Q |\eta_k\nabla\psi_k|^p = \beta_k^{-p}\int_Q |\alpha_k \nabla\psi_k|^p =  \beta_k^{-p}\int_Q |\nphi_k|^p
%\]
we have that $(\eta_k\psi_k)$ is bounded in $W^{1,p}(Q)$.
We may thus decompose $\psi_k$ to find $g_k \in C^\infty_c(Q)$, $b_k\in H^1(Q)$ as in Proposition \ref{prop:3}.
Write
\begin{equation}\label{eq:7}
f_k(x) = \alpha_k^{-2}\left[\tGF(H_k,\alpha_k\nabla\psi_k) - \tGF(H_k,\alpha_k\nabla b_k) \right]
\end{equation}
and note that, since $\alpha_k\psi_k = \varphi_k$, by \eqref{eq:6},
\begin{equation}\label{eq:7a}
\int_Q f_k(x) + \alpha_k^{-2} \tGF(H_k,\alpha_k\nabla b_k) + \frac{c_1}2|\psi_k|^2<0.
 %= \alpha_k^{-2}\int_Q \tGF(H_k,\alpha_k\nabla\psi_k) + \frac{c_1}2|\alpha_k\psi_k|^2\\
%= \alpha_k^{-2}\int_Q \tGF(H_k,\nabla\varphi_k) + \frac{c_1}2|\varphi_k|^2 < 0,
\end{equation}
%by \eqref{eq:6}. We note that the term $\int_Q f_k$ encodes the contribution of the oscillatory part of the sequence $\psi_k$ on the first variation $\tGFk$, whereas the term
%\[
%\alpha_k^{-2} \int_Q \tGFk(x,\alpha_k\nabla b_k)
%\]
%encodes the contribution of the concentrating part. In the next two steps, we will show that the contribution of the concentrating part is nonnegative in the limit due to quasiconvexity and, particularly, due to Proposition \ref{prop:2}. Similarly, the term
%\[
%\int_Q f_k + \frac{c_1}2|\psi_k|^2
%\]
%must also be nonnegative in the limit
%\underline{Step 3}: 
%We show that the contribution of the concentrating part in \eqref{eq:7a} is nonnegative due to quasiconvexity. In particular, we show that
%\begin{equation}\label{eq:8}
%\frac{c_1}2\int_Q |\psi|^2 + \liminf_{k\to\infty} \int_Q f_k(x) \leq 0.
%\end{equation}
%\begin{equation}\label{eq:7b}
%\liminf_{k\to\infty}\alpha_k^{-2} \int_Q \tGF(H_k,\alpha_k\nabla b_k) \geq 0,
%\end{equation}
%as a consequence of Proposition \ref{prop:2}. Combined with \eqref{eq:7a} and since $\psi_k\rightarrow \psi$ strongly in $L^2(Q)$,
%\begin{equation}\label{eq:8}
%\frac{c_1}2\int_Q |\psi|^2 + \liminf_{k\to\infty} \int_Q f_k(x) \leq 0.
%\end{equation}
%To prove \eqref{eq:8}, 
\textcolor{black}{Following the idea of the proofs in \cite{JCCVMO,JCCKK,GM} we show that, in the limit, the contribution of the purely concentrating part $\alpha_k^{-2} \tGF(H_k,\alpha_k\nabla b_k)$ in \eqref{eq:7a} is non-negative due to quasiconvexity. This follows by applying Proposition \ref{prop:2} with $a_k = \alpha_k$ and $h_k = \alpha_k b_k$ to the term $\alpha_k^{-2} \tGF(H_k,\alpha_k\nabla b_k)$, after noting that
\begin{equation}\label{eq:8Lambda}
\alpha_k^{p-2} = \frac{\beta_k^p}{\alpha_k^2}\eta_k^p = \Lambda \eta_k^p,
\end{equation}
where, by Step 1, $\Lambda = \beta_k^p/\alpha_k^2$ is bounded abd thus
%\begin{equation}\label{eq:8}
%\frac{c_1}2\int_Q |\psi|^2 + \liminf_{k\to\infty} %\int_Q f_k(x) \leq 0.
%\end{equation}
%Thus,
\[
a_k^{-2}|V(\alpha_kb_k)|^2 = |b_k|^2 + \Lambda |\eta_k b_k|^p \rightarrow 0\mbox{ in }L^1(Q).
\]
Also, $a_k^{-2}|V(\nabla h_k)|^2 = |\nabla b_k|^2 + \Lambda |\eta_k \nabla b_k|^p$ which is bounded in $L^1(Q)$. So, Proposition \ref{prop:2} says that
\begin{align*}
0\leq \liminf_{k\to\infty} \frac{c_0}4\int_Q\alpha_k^{-2} |V(\alpha_k \nabla b_k)|^2 \leq \liminf_{k\to\infty} \alpha_k^{-2}\int_Q\tGF(H_k,\alpha_k \nabla b_k).
\end{align*}
In particular,
\begin{equation}\label{eq:8}
\frac{c_1}2\int_Q |\psi|^2 + \liminf_{k\to\infty} \int_Q f_k(x) \leq 0.
\end{equation}
}
\underline{Step 3}: Let $\nu = (\nu_x)_{x\in Q}$ be the $W^{1,2}$ gradient Young measure generated by the sequence $\psi_k$ and recall that $H_k \rightarrow H$ in $C^0(Q)$. We show that
\begin{equation}\label{eq:8a}
\frac12 \int_Q \langle \nu_x, D^2\tW(H(x))\xi:\xi\rangle \leq \liminf_{k\to\infty} \int_Q f_k(x).
\end{equation}
In particular, in conjunction with \eqref{eq:8}, we infer that
\begin{equation}\label{eq:9}
\frac{1}2\int_Q c_1 |\psi|^2 +  \langle \nu_x, D^2\tW(H(x))\xi:\xi\rangle \leq 0.
\end{equation}
\textcolor{black}{In Step 4 we show that \eqref{eq:9}, in conjunction with Proposition \ref{prop:1}, leads to a contradiction.}

To prove \eqref{eq:8a} we show the equiintegrability of $(f_k)$ defined in \eqref{eq:7}. Indeed,
%\[
%f_k(x) = \alpha_k^{-2}\left[\tGF(H_k,\alpha_k\nabla\psi_k) - \tGF(H_k,\alpha_k\nabla b_k) \right].
%\]
by Lemma \ref{lemma:technical_general} (a) and for a constant $C=C(\tW,K)$, Young's inequality gives
\begin{align*}
|f_k| & \leq C(|\nabla\psi_k|+ |\nabla b_k| + \alpha_k^{p-2}|\nabla\psi_k|^{p-1} + \alpha_k^{p-2}|\nabla b_k|^{p-1})|\nabla\psi_k - \nabla b_k|\\
& \leq \delta C (|\nabla\psi_k|^2+ |\nabla b_k|^2) + C(\delta)| \nabla(\psi+g_k)|^2\\
&\quad + \delta C(\alpha_k^{p-2}|\nabla\psi_k|^p+ \alpha_k^{p-2}|\nabla b_k|^p) + C(\delta) \alpha_k^{p-2} | \nabla(\psi+g_k)|^p,
\end{align*}
recalling that, by Proposition \ref{prop:3}, $\nabla\psi_k - \nabla b_k = \nabla(\psi+g_k)$. However, by Proposition \ref{prop:3}, $\psi_k$ and $b_k$ are bounded in $W^{1,2}(Q)$, whereas $(|\nabla(\psi+g_k)|^2)$ is equiintegrable. Similarly, since $\alpha_k^{p-2} = \Lambda \eta_k^p$ we infer that $\alpha_k^{p-2}|\nabla\psi_k|^p$ and $\alpha_k^{p-2}|\nabla b_k|^p$ are bounded, whereas $\alpha_k^{p-2} | \nabla(\psi+g_k)|^p$ is equiintegrable. Hence, $(f_k)$ is also equiintegrable and for $\eps > 0$ fixed, we can find $m_\eps$ such that 
%for all $m\geq m_\eps$
%given a set $A\subset Q$
%\[
%\int_A |f_k|\leq \delta C + C(\delta)\int_A|\nabla(\psi+g_k)|^2 + \alpha_k^{p-2} | \nabla(\psi+g_k)|^p
%\]
%and $(f_k)$ is also equiintegrable. Then, for $\eps > 0$ fixed, we can find $m_\eps$ such that for all $m\geq m_\eps$
%\[
%\int_{\{|\nabla\psi_k|\geq m\}\cup\{|\nabla b_k|\geq m\}} |f_k| < \eps.
%\]
%\[
%\lim_{r\to\infty} \sup_k \{|\nabla\psi_k|>r\} = 0.
%\]
\begin{equation}\label{eq:10}
\int_Q f_k > -\eps + \int_{\{|\nabla\psi_k| < m\}\cap\{|\nabla b_k|< m\}} f_k,\quad\forall\, m\geq m_\eps.
\end{equation}
This follows as $\nabla b_k\rightarrow 0$ in measure and $\lim_{r\to\infty} \sup_k \{|\nabla\psi_k|>r\} = 0$. Also, 
%By choosing $m_\eps$ larger if necessary, and 
since $\int_Q \langle\nu_x, |\xi|^2\rangle < \infty$, we may assume that for all $m\geq m_\eps$,
%\begin{equation}\label{eq:11}
%\left|\int_Q \langle \nu_x, D^2\tW(H(x))\xi:\xi\mathbbm{1}_{\Rdd\setminus B(0,m)}(\xi)\rangle\right| < \eps,\quad\forall\, m\geq m_\eps,
%\end{equation}
%. Note that \eqref{eq:11} follows since $\int_Q \langle\nu_x, |\xi|^2\rangle < \infty$.
%In particular, \eqref{eq:11} says that
\begin{equation}\label{eq:12}
\int_Q \langle \nu_x, D^2\tW(H)\xi:\xi \rangle = \int_Q \langle \nu_x, D^2\tW(H)\xi:\xi\chi_{B(0,m)}(\xi) \rangle + \eps,
\end{equation}
where $\chi_{A}$ denotes the indicator function of a set $A\subset\Rdd$. Since $B(0,m)$ is open, for all $x\in Q$ the function
$
\xi\mapsto D^2\tW(H(x))\xi:\xi \chi_{B(0,m)(\xi)}
$
is lower semicontinuous and, as $(\nabla\psi_k)$ generates $(\nu_x)_{x\in Q}$ and $H_k \rightarrow H$ in $C^0(Q)$, we deduce that
\begin{multline}\label{eq:13}
\int_Q \langle \nu_x, D^2\tW(H)\xi:\xi \chi_{B(0,m)}(\xi) \rangle \leq \liminf_{k\to\infty}\int_{\{|\nabla\psi_k|<m\}}D^2\tW(H)\nabla\psi_k:\nabla\psi_k\\
= \liminf_{k\to\infty}\int_{\{|\nabla\psi_k|<m\}}D^2\tW(H_k)\nabla\psi_k:\nabla\psi_k.
\end{multline}
%where the equality in \eqref{eq:13} follows from the fact that
%\begin{multline*}
%\int_{\{|\nabla\psi_k|<m\}}\left[D^2\tW(H_k) - D^2\tW(H)\right]\nabla\psi_k:\nabla\psi_k \\
%\leq C(\tW,K) \|H_k - H\|_{L^\infty(Q)}\|\nabla\psi_k\|^2_{L^2(Q)} \to 0,\quad k\to\infty,
%\end{multline*}
Combining \eqref{eq:13} with \eqref{eq:12}, we now infer that for all $m\geq m_\eps$
\begin{equation}\label{eq:14}
\int_Q \langle \nu_x, D^2\tW(H)\xi:\xi \rangle \leq \liminf_{k\to\infty}\int_{\{|\nabla\psi_k|<m\}}D^2\tW(H_k)\nabla\psi_k:\nabla\psi_k+\eps.
 \end{equation}
To conclude the proof, we next claim that
\begin{equation}\label{eq:15}
\frac12 \liminf_{k\to\infty}\int_{\{|\nabla\psi_k|<m\}}D^2\tW(H_k)\nabla\psi_k:\nabla\psi_k
= \lim_{k\to\infty}\int_{\{|\nabla\psi_k|<m\}\cap \{|\nabla b_k|<m\}} f_k.
\end{equation}
Before proving \eqref{eq:15}, note that in conjunction with \eqref{eq:14} and \eqref{eq:10} it says that
%\begin{multline*}
%\frac12 \int_Q \langle \nu_x, D^2\tW(H(x))\xi:\xi \rangle \leq  \lim_{k\to\infty}\int_{\{|\nabla\psi_k|<m\}\cap \{|\nabla b_k|<m\}} f_k+\frac{\eps}{2},\quad\forall\,m\geq m_\eps
%\end{multline*}
%which, together with \eqref{eq:10} results in
\[
\frac12 \int_Q \langle \nu_x, D^2\tW(H(x))\xi:\xi \rangle \leq \liminf_{k\to\infty}\int_Q f_k + \frac{3\eps}{2}.
\]
By taking  $\eps\to 0$, \eqref{eq:8a} follows. To prove \eqref{eq:15},
%Recall that
%\begin{align*}
%f_k & = \alpha_k^{-2}\left[\tGF(H_k,\alpha_k \nabla\psi_k) - \tGF(H_k,\alpha_k \nabla b_k)\right]\\
%& = \int_0^1(1-s) \left[D^2\tW(H_k+ s\alpha_k\nabla\psi_k)\nabla\psi_k:\nabla\psi_k - D^2\tW(H_k+ s\alpha_k\nabla b_k)\nabla b_k:\nabla b_k\right] \,ds.
%\end{align*}
set
$
A_k :=\{|\nabla\psi_k|<m\}$
and $B_k:=\{|\nabla b_k|<m\}$,
so that
\begin{align*}
&\chi_{A_k\cap B_k} f_k  = \chi_{A_k\cap B_k}\int_0^1(1-s) \left[D^2\tW(H_k+ s\alpha_k\nabla\psi_k) - D^2\tW(H_k)\right]\nabla\psi_k:\nabla\psi_k\,ds\\
&+ \chi_{A_k} \frac12 D^2\tW(H_k)\nabla\psi_k:\nabla\psi_k - \chi_{A_k} \frac12 D^2\tW(H_k)\nabla\psi_k:\nabla\psi_k \left(1 - \chi_{B_k}\right)\\
& - \chi_{A_k\cap B_k} \int_0^1(1-s) D^2\tW(H_k+ s\alpha_k\nabla b_k)\nabla b_k:\nabla b_k \,ds =: I_1^k + I_2^k + I_3^k + I_4^k.
\end{align*}
%We immediately infer that
%\[
%\int_Q I_2^k = \frac12 \int_{\{|\nabla\psi_k|<m\}}D^2\tW(H_k)\nabla\psi_k:\nabla\psi_k
%\]
%To prove \eqref{eq:15} 
Hence, it suffices to show that $I_i^k\to 0$, for $i=1,3,4$, as $k\to\infty$
%\[
%\lim_{k\to\infty} \int_Q I_1^k = \lim_{k\to\infty} \int_Q I_3^k = \lim_{k\to\infty} \int_Q I_4^k = 0
%\]
which follows by dominated convergence as $\alpha_k\to 0$, $H_k \to H$ in $C^0(Q)$ and $\nabla b_k \to 0$ in measure.
%\\\quad\\
%. Thus, for $I_1^k$ and since we are in the set $A_k$, we find that
%\[
%\left|D^2\tW(H_k+s\alpha_k\nabla\psi_k) - D^2\tW(H_k)\right| \leq C(W,K) \alpha_k m^3 \to 0,\quad k\to\infty.
%\]
%Thus, $\int_Q I_1^k \to 0$ by dominated convergence. As for $ I_3^k$, again since $D^2\tW$ is smooth and $\|H_k\|_{L^\infty(Q)}\leq K$, we get that
%\[
%|I_3^k| \leq C(W,K) m^2 \left(1 - \mathbbm{1}_{\{|\nabla b_k|<m\}}\right) = C(W,K) m^2  \mathbbm{1}_{\{|\nabla b_k|\geq m\}}.
%\]
%Hence, $\int_Q I_3^k \to 0$ as $\nabla b_k \to 0$ in measure.
%\[
%\left| \int_Q I_3^k\right| \leq C(W,K) m^2 \left|{\{|\nabla b_k|\geq m\}}\right| \to 0, \quad k\to\infty,
%\]
%as $\nabla b_k \to 0$ in measure. 
%Lastly, for $I_4^k$, since in $B_k = \{|\nabla b_k|<m\}$ and $s\in(0,1)$, we get that
%\[
%H_k + s\alpha_k \nabla b_k \rightarrow H
%\]
%uniformly as $k\to\infty$ and thus
%\[
%|I_4^k| \leq C(W,K)m |\nabla b_k| \to 0\mbox{ in measure}.
%\]
%In particular, restricting to $B_k$, $\int_Q I_4^k \to 0$ by dominated convergence. This concludes the proof of Step 4.\\\quad\\

\underline{Step 4}: We employ Proposition \ref{prop:1}, combined with \eqref{eq:9}, to reach a contradiction. By \eqref{eq:qc_rc}, the function $\xi\mapsto D^2\tW(H(x))\xi:\xi$
is quasiconvex for each $x\in Q$. Since 
%$\nabla\psi_k$ generates the $W^{1,2}$ gradient Young measure
 $(\nu_x)_{x\in Q}$ is a gradient Young measure, Jensen's inequality implies
\[
\int_Q c_1|\psi|^2 + D^2\tW(H(x))\nabla\psi:\nabla\psi \leq \int_Q c_1|\psi|^2 + \langle\nu_x, D^2\tW(H(x))\xi:\xi\rangle \leq 0,
\]
by \eqref{eq:9}, after adding $c_1|\psi|^2$ and integrating over $Q$. However, by Proposition \ref{prop:1},
\[
\int_Q c_1|\psi|^2 + D^2\tW(\bar{F}(x))\nabla\psi:\nabla\psi \geq \frac{c_0}2\int_Q|\nabla\psi|^2,\quad\forall\psi\in W^{1,2}(Q),
\]
i.e. $\nabla\psi = 0$ and, since $\psi\in H^1_0(Q)$, $\psi = 0$. Thus, recalling Step 1, we may argue as in Step 2 and apply Proposition \ref{prop:2} with $a_k = \alpha_k$ and $h_k = \alpha_k\psi_k$, to infer that
\begin{align*}
0 < \frac{c_0}4 %& = \liminf_{k\to\infty} \frac{c_0}4 \int_Q |\nabla\psi_k|^2 \\
& \leq \liminf_{k\to\infty} \frac{c_0}4 \int_Q |\nabla\psi_k|^2 + \alpha_k^{p-2}|\nabla\psi_k|^p
 \leq \liminf_{k\to\infty}\alpha_k^{-2}\int_Q \tGF(H_k,\alpha_k \nabla\psi_k)\\
%& = \liminf_{k\to\infty}\alpha_k^{-2}\int_Q \tGF(H_k,\nphi_k),\quad\mbox{as }\alpha_k\nabla\psi_k = \nabla\varphi_k\\
& = \liminf_{k\to\infty}\alpha_k^{-2}\int_Q \tGF(H_k,\nphi_k) + \frac{c_1}{2} |\varphi_k|^2 \leq 0,
\end{align*}
by \eqref{eq:6} as $\alpha_k^{-1}\varphi_k = \psi_k\to 0$. This contradiction concludes the proof.
\end{proof}

%%      ---------------------------------------------------------------------
%%      ------------------------- APPENDIX (OPTIONAL) -----------------------
%%      ---------------------------------------------------------------------
        
%%      If you have one appendix, uncomment the line \appendix and add
%%      a \section{ *** APPENDIX TITLE ***}. If you have more than
%%      one, uncomment the line \appendices and add a \section{ ***
%%      APPENDIX TITLE ***} command for each appendix title.

%\appendix
%\appendices
%\section{}

%%      Type body of appendix/-ices here.

%%      ---------------------------------------------------------------------
%%      ---------------------------ACKNOWLEDGMENTS (OPTIONAL) ---------------
%%      ---------------------------------------------------------------------

%% ***** UNCOMMENT THE FOLLOWING LINE TO ADD ACKNOWLEDGMENTS.

 \section*{Acknowledgement}
The authors would like to thank John M. Ball and Jan Kristensen for useful discussions on the present work.
S. Spirito acknowledges the support by INdAM-GNAMPA Project: {\em Analisi di Modelli Matematici della Fisica, della Biologia e delle Science Sociali}. This work was partially written when both authors were affiliated to the GSSI - Gran Sasso Science Institute, L'Aquila, Italy and they would like to acknowledge the Institute's support.

%%      Type acknowledgments here.

%%      ---------------------------------------------------------------------
%%      --------------------------- BIBLIOGRAPHY ----------------------------
%%      ---------------------------------------------------------------------

\frenchspacing
%\bibliography{weakstrong}
%\bibliographystyle{cpam}

\end{document}